\documentclass[12pt]{amsart}
\oddsidemargin 3ex
\evensidemargin 3ex
\textheight 7.9in
\textwidth 6.0in





\newtheorem{theorem}{Theorem}[section]
\newtheorem{lemma}[theorem]{Lemma}
\newtheorem{proposition}[theorem]{Proposition}

 \theoremstyle{definition}

\theoremstyle{remark}
\newtheorem{remark}[theorem]{Remark}

\numberwithin{equation}{section}



\begin{document}
\setlength{\baselineskip}{1.2\baselineskip}

\title[ mixed Hessian equations]
{The Classical Neumann Problem for a class of mixed Hessian equations}
\author{Chuanqiang Chen}
\address{School of Mathematics and Statistics, Ningbo University, Ningbo,  315211, P.R. China}
\email{chenchuanqiang@nbu.edu.cn}

\author{Li Chen}
\address{Faculty of Mathematics and Statistics, Hubei Key Laboratory of Applied Mathematics, Hubei University,  Wuhan 430062, P.R. China}
\email{chernli@163.com}

\author{Ni Xiang}
\address{Faculty of Mathematics and Statistics, Hubei Key Laboratory of Applied Mathematics, Hubei University,  Wuhan 430062, P.R. China}
\email{nixiang@hubu.edu.cn}
\thanks{Research of the first author was supported by NSFC No.11771396, and research of the second and the third authors was supported by funds from Hubei Provincial Department of Education Key Projects D20171004, D20181003 and the National Natural Science Foundation of China No.11971157.}

\begin{abstract}
In this paper, we establish global $C^2$ estimates for a class of mixed Hessian equations with Neumann boundary condition, and obtain the existence theorem of $k$-admissible solutions for the classical Neumann problem of these mixed Hessian equations.

{\em Mathematical Subject Classification (2010):}
 Primary 35J60, Secondary
35B45.

{\em Keywords:} Neumann problem, {\em a priori} estimate, mixed Hessian equation.

\end{abstract}

\maketitle
\bigskip

\section{Introduction}

\medskip
In this paper, we consider the classical Neumann problem for the following mixed Hessian equations
\begin{equation}\label{1.1}
\sigma_k(D^2u) = \sum _{l=0}^{k-1} \alpha_l(x) \sigma_l (D^2u),  \text{ in~} \Omega,
\end{equation}
where $k\geq 2$, $\Omega \subset \mathbb{R}^n$ is a bounded domain, $D^2 u$ is the Hessian matrix of the function $u$, $\alpha_l(x)>0$ in $\overline{\Omega}$ with $l=0, 1, \cdots, k-1$, are given positive functions in $\overline{\Omega}$, and for any $m = 1, \cdots, n$,
\begin{equation*}
\sigma_m(D^2 u) = \sigma_m(\lambda(D^2 u)) = \sum _{1 \le i_1 < i_2 <\cdots<i_m\leq n}\lambda_{i_1}\lambda_{i_2}\cdots\lambda_{i_m},
\end{equation*}
with $\lambda(D^2 u) =(\lambda_1,\cdots,\lambda_n)$ being the eigenvalues of $D^2 u$. We also set $\sigma_0=1$.
Recall that the G{\aa}rding's cone is defined as
\begin{equation*}
\Gamma_k  = \{ \lambda  \in \mathbb{R}^n :\sigma _i (\lambda ) > 0,\forall 1 \le i \le k\}.
\end{equation*}
If $\lambda(D^2 u) \in \Gamma_k$ for any $x \in \Omega$, we say $u$ is a $k$-admissible function.

The equation \eqref{1.1} is a general class of mixed Hessian equation. Specially, it is Monge-Amp\`{e}re equation when $k=n$, $\alpha_0(x) >0$ and $\alpha_1(x) = \cdots = \alpha_{n-1}(x) \equiv 0$, $k$-Hessian equation when $\alpha_0(x) >0$ and $\alpha_1(x) = \cdots = \alpha_{k-1}(x) \equiv 0$, and Hessian quotient equation when $\alpha_{m}(x) >0$ ($k-1\geq m>0$) and $\alpha_0(x)= \cdots= \alpha_{m-1}(x)= \alpha_{m+1}(x) = \cdots= \alpha_{k-1}(x) \equiv 0$. This kind of equations is motivated from the study of many important geometric problems. For example, the problem of prescribing convex combination of area measures was proposed in \cite{S}, which leads to mixed Hessian equations of the form
\[
\sigma _k (\nabla^2 u + uI_n ) + \sum\limits_{i = 0}^{k - 1} {\alpha _i \sigma _i (\nabla^2 u + uI_n )}  = \phi (x), x \in  \mathbb{S}^n.
\]
The special Lagrangian equation introduced by Harvey-Lawson \cite{HL82} in the study of calibrated geometries is also a mixed type Hessian equation
\[
{\mathop{\rm Im}\nolimits} \det (I_{2n}  + \sqrt { - 1} D^2 u) = \sum\limits_{k = 0}^{[(n - 1)/2]} {( - 1)^k \sigma _{2k + 1} (D^2 u)}  = 0.
\]
Another important example is Fu-Yau equation in \cite{FY2007,FY2008} arising from the study of the Hull-Strominger system in theoretical physics, which is an equation that can be written as the linear combination of the first and the second elementary symmetric functions
\[
\sigma _1 (i\partial \overline \partial  (e^u  + \alpha 'e^{ - u} )) + \alpha '\sigma _2 (i\partial \overline \partial  u) = 0.
\]

For the Dirichlet problem of elliptic equations in $\mathbb{R}^n$, many results are well known. For example, the Dirichlet problem of the Laplace equation was studied in \cite{GT}. Caffarelli-Nirenberg-Spruck \cite{CNS84} and Ivochkina \cite{I87} solved the Dirichlet problem of the Monge-Amp\`{e}re equation. Caffarelli-Nirenberg-Spruck \cite{CNS85} solved the Dirichlet problem of the $k$-Hessian equation. For the general Hessian quotient equation, the Dirichlet problem was solved by Trudinger in \cite{T95}.

Also, the Neumann or oblique derivative problem of partial differential equations  has been widely studied. For a priori estimates and the existence theorem of Laplace equation with Neumann boundary condition, we refer to the book \cite{GT}. Also, we can see the recent book written by Lieberman
\cite{L13} for the Neumann or oblique derivative problem of linear and quasilinear elliptic equations. In 1986, Lions-Trudinger-Urbas solved the Neumann problem of the Monge-Amp\`{e}re equation in the celebrated paper \cite{LTU86}. For related results on the Neumann or oblique derivative problem for some class of fully nonlinear elliptic equations can be found in Urbas \cite{U95} and \cite{U96}. For the Neumann problem of $k$-Hessian equations,  Trudinger \cite{T87} established the existence theorem when the domain is a ball, and Ma-Qiu \cite{MQ15} and Qiu-Xia \cite{QX16} solved the strictly convex domain case. D.K. Zhang and the first author \cite{CZ16} solved the Neumann problem of general Hessian quotient equations. Jiang and Trudinger \cite{JT15,JT16,JT3}, studied the general oblique boundary problem for augmented Hessian equations with some regular conditions and concavity conditions.

Krylov in \cite{Kr} considered the Dirichlet problem of \eqref{1.1} with $\alpha_l(x)\geq 0$ for $0\leq l\leq k-1$, and he observed that the natural admissible cone to make equation elliptic is also the $\Gamma_k$. Recently, Guan-Zhang in \cite{GZ2019} considered the $(k-1)$-admissible solution without the sign of $\alpha_{k-1}$ and obtained the global $C^2$ estimates.

Naturally, we want to know how about the classical Neumann problem of \eqref{1.1}. In this paper, we obtain the existence theorem as follows,
\begin{theorem} \label{th1.1}
Suppose that $\Omega \subset \mathbb{R}^n$ is a $C^4$ strictly convex domain, $2 \leq k \leq n$, $\nu$ is the outer unit normal vector of $\partial \Omega$, $\alpha_l(x) \in C^2(\overline{\Omega})$ with $l=0, 1, \cdots, k-1$ are positive functions and $\varphi \in C^3(\partial \Omega)$. Then there exists a unique constant $c$, such that the classical Neumann problem
\begin{align} \label{1.2}
\left\{ \begin{array}{l}
\sigma _k (D^2 u) = \sum\limits_{l=0}^{k-1} \alpha_l(x) \sigma_l (D^2u),  \quad \text{in} \quad \Omega,\\
u_\nu = c + \varphi(x),\qquad \text{on} \quad \partial \Omega,
 \end{array} \right.
\end{align}
has $k$-admissible solutions $u \in C^{3, \alpha}(\overline \Omega)$, which are unique up to a constant.
\end{theorem}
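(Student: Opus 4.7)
The plan is to apply the continuity method, reducing the existence assertion to \emph{a priori} $C^{2,\alpha}$ bounds for $k$-admissible solutions of the Neumann problem. Because the equation contains no zeroth-order term in $u$, the constant $c$ is a genuine unknown, and I would treat the pair $(u,c)$ as the solution, normalized for instance by $\min_{\overline{\Omega}} u = 0$, and interpolate along a one-parameter family of mixed Hessian Neumann problems connecting \eqref{1.2} to a solvable model. Openness follows from the implicit function theorem: the linearization of $F(D^2 u, x) = \sigma_k(D^2 u) - \sum_l \alpha_l \sigma_l(D^2 u)$ at an admissible solution is $L v = F^{ij} v_{ij}$, which with Neumann boundary condition has a one-dimensional kernel of constants exactly compensated by the one-parameter unknown $c$, giving a Fredholm index-zero operator with trivial kernel on normalized pairs. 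Closedness reduces to the estimate chain below.

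For the $C^1$ estimate I would use a Ma--Qiu-type auxiliary function of the form $\Phi = \log |Du|^2 + g(u)$ inside $\Omega$, combined near $\partial\Omega$ with a boundary auxiliary built from the signed distance and $\varphi$; strict convexity of $\Omega$ then controls the tangential derivatives on $\partial\Omega$ from the Neumann data, and integration yields the oscillation bound and hence the $C^0$ estimate. For the global $C^2$ estimate I would apply the maximum principle to a test quantity such as $W = \log \lambda_{\max}(D^2 u) + \gamma |Du|^2 + A u$; the standard calculation in $\Gamma_k$ reduces the global bound to one on $\partial\Omega$, with the mixed right-hand side handled by grouping the lower-order $\sigma_l$ terms and exploiting that the requisite concavity/monotonicity in $\Gamma_k$ is preserved thanks to $\alpha_l > 0$.

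The hard part will be the boundary double-normal estimate $u_{\nu\nu}|_{\partial\Omega}$. The tangential-tangential and mixed tangential-normal bounds should follow from local barrier constructions in the spirit of Lions--Trudinger--Urbas, using strict convexity of $\partial\Omega$ and $C^3$ regularity of $\varphi$. Given those, one evaluates \eqref{1.1} on $\partial\Omega$ and solves for $u_{\nu\nu}$, obtaining the upper bound from a uniform lower bound on the coefficient of $u_{\nu\nu}$ there; this coefficient is built from the tangential minors of $D^2 u$ together with the lower-order $\sigma_l$ contributions, and the delicate step is verifying that the mixed structure does not destroy its positivity. The argument parallels Ma--Qiu for the pure $k$-Hessian case and Chen--Zhang \cite{CZ16} for the Hessian quotient case, with $\alpha_l > 0$ essential for controlling the extra contributions from $\sum_l \alpha_l \sigma_l$.

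Once the $C^2$ estimates are in place the equation is uniformly elliptic on the admissible set, so Evans--Krylov and Schauder theory yield $C^{3,\alpha}$ bounds, and the continuity method closes to produce a $k$-admissible solution. For uniqueness, given two solutions $(u_1, c_1)$ and $(u_2, c_2)$, set $v = u_1 - u_2 - s$ with $s$ chosen so that $v$ attains a nonnegative maximum on $\overline{\Omega}$; linearizing $F$ along the segment from $D^2 u_2$ to $D^2 u_1$ gives a uniformly elliptic equation $\widetilde{L} v = 0$ with Neumann data $v_\nu = c_1 - c_2$, and the strong maximum principle together with the Hopf lemma forces $v \equiv 0$ and $c_1 = c_2$.
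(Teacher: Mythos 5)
Your proposal diverges from the paper in two substantive ways, one of which is a genuinely different but plausible route, the other of which is a gap.

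On the treatment of the unknown constant $c$: you propose a direct continuity method on normalized pairs $(u,c)$, invoking a Fredholm index-zero argument for openness. The paper instead follows Lions--Trudinger--Urbas and replaces the unknown constant by an $\varepsilon$-penalization, solving $u_\nu = -\varepsilon u + \varphi$ on $\partial\Omega$, establishing $\varepsilon$-independent $C^2$ estimates, and passing to the limit $\varepsilon\to 0$ to extract both $c$ and $u$ simultaneously. The paper explicitly remarks (Remark~\ref{rmk1.2}) that the solution family $u + \text{const.}$ blocks uniform bounds and hence a direct continuity method; your normalization $\min u = 0$ addresses this, but the Fredholm compensation claim needs verification (the cokernel of the linearized Neumann problem is spanned by a solution of the formal adjoint, not by constants, so compatibility with the added parameter $c$ requires an explicit pairing argument). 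More to the point, the $\varepsilon$-coupling $u_\nu = -\varepsilon u + \varphi$ is not merely a device for existence: it is used essentially inside the $C^1$ auxiliary function $W(x,\xi)$ and inside the boundary barriers $P,\widetilde P$ of Step~2 and Step~3, where the terms $\varepsilon u$ appear explicitly.

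The genuine gap is in the double-normal estimate. You propose to control $u_{\xi\xi}$ and $u_{\xi\nu}$ for tangential $\xi$ and then evaluate the equation on $\partial\Omega$ to ``solve for $u_{\nu\nu}$.'' That argument requires a uniform positive lower bound on the coefficient of $u_{\nu\nu}$ in $\sigma_k(D^2u)$, which is $\sigma_{k-1}$ of the tangential restriction of $D^2 u$. For $k<n$ a $k$-admissible solution is in general far from convex and there is no a priori lower bound on this tangential minor; upper bounds on $u_{\xi\xi}$ and $u_{\xi\nu}$ do not furnish one. This is precisely the obstruction that led Ma--Qiu and Chen--Zhang, and this paper following them, to replace the ``solve on the boundary'' strategy by a barrier argument in a collar $\Omega_\mu$: Step~1 reduces the global bound to $\max_{\partial\Omega}|u_{\nu\nu}|$ via the auxiliary function $v(x,\xi)=u_{\xi\xi}-v'(x,\xi)+K|x|^2+|Du|^2$, and then Steps~2 and~3 bound $\min_{\partial\Omega}u_{\nu\nu}$ and $\max_{\partial\Omega}u_{\nu\nu}$ separately by maximum-principle arguments applied to $P$ and $\widetilde P$, using Lemma~\ref{lem2.8} and Lemma~\ref{lem2.9} to exploit the structure of $G$ when an eigenvalue is large and negative or when the spectrum spreads. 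Your proposal does not contain an analogue of this mechanism, and the ``solve for $u_{\nu\nu}$'' approach as stated would not close for $k<n$.
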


\begin{remark} \label{rmk1.2}
For the classical Neumann problem of mixed Hessian equations \eqref{1.2}, it is easy to know that a solution plus any constant is still a solution. So we cannot obtain a uniform bound for the solutions of \eqref{1.2}, and cannot use the method of continuity directly to get the existence. As in Lions-Trudinger-Urbas \cite{LTU86} (see also Qiu-Xia \cite{QX16}), we consider the $k$-admissible solution $u^\varepsilon$ of the approximation equation
\begin{align}\label{1.3}
\left\{ \begin{array}{l}
\sigma _k (D^2 u) = \sum\limits_{l=0}^{k-1} \alpha_l(x) \sigma_l (D^2u),  \quad \text{in} \quad \Omega,\\
u_\nu = - \varepsilon u  + \varphi(x),\qquad \text{on} \quad \partial \Omega,
 \end{array} \right.
\end{align}
for any small $\varepsilon >0$. We need to establish a priori estimates of $u^\varepsilon$ independent of $\varepsilon$, and then we can obtain a solution of \eqref{1.2} by letting $\varepsilon \rightarrow 0$ and a perturbation argument. The uniqueness holds from the maximum principle and Hopf Lemma.
\end{remark}

The rest of this paper is organized as follows. In Section 2, we give some definitions and important lemmas. In Section 3, we prove global $C^2$ estimates of \eqref{1.3}. In Section 4, we give the proof for the existence, that is Theorem \ref{th1.1}.

\section{Preliminaries}

In this section, we give some basic properties of elementary symmetric functions, which could be found in
\cite{L96}, and establish some key lemmas.

\subsection{Basic properties of elementary symmetric functions}

First, we denote by $\sigma _m (\lambda \left| i \right.)$ the symmetric
function with $\lambda_i = 0$ and $\sigma _m (\lambda \left| ij
\right.)$ the symmetric function with $\lambda_i =\lambda_j = 0$.
\begin{proposition}\label{prop2.1}
Let $\lambda=(\lambda_1,\dots,\lambda_n)\in\mathbb{R}^n$ and $m
= 1, \cdots,n$, then
\begin{align*}
&\sigma_m(\lambda)=\sigma_m(\lambda|i)+\lambda_i\sigma_{m-1}(\lambda|i), \quad \forall \,1\leq i\leq n,\\
&\sum_i \lambda_i\sigma_{m-1}(\lambda|i)=m\sigma_{m}(\lambda),\\
&\sum_i\sigma_{m}(\lambda|i)=(n-m)\sigma_{m}(\lambda).
\end{align*}
\end{proposition}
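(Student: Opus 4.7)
The plan is to prove the three identities by direct combinatorial manipulation of the defining sum
\[
\sigma_m(\lambda) = \sum_{1 \le i_1 < \cdots < i_m \le n} \lambda_{i_1} \cdots \lambda_{i_m},
\]
splitting it according to whether a fixed index appears in the product, and then using a double-counting argument for the two summation formulas.

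For the first identity, I would fix $i$ and partition the set of increasing $m$-tuples $(i_1,\ldots,i_m)$ into those with $i \notin \{i_1,\ldots,i_m\}$ and those with $i \in \{i_1,\ldots,i_m\}$. The first class contributes exactly $\sigma_m(\lambda|i)$ by definition (since setting $\lambda_i=0$ kills all terms from the second class). In the second class, one can pull out the factor $\lambda_i$, and the remaining $(m-1)$-fold product ranges over all increasing $(m-1)$-tuples avoiding $i$, giving $\lambda_i\,\sigma_{m-1}(\lambda|i)$. Adding the two pieces yields $\sigma_m(\lambda)=\sigma_m(\lambda|i)+\lambda_i\sigma_{m-1}(\lambda|i)$.

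For the second identity, I would switch the order of summation in $\sum_i \lambda_i\sigma_{m-1}(\lambda|i)$: a monomial $\lambda_{i_1}\cdots\lambda_{i_m}$ arising from $\sigma_m(\lambda)$ is produced by $\lambda_i\sigma_{m-1}(\lambda|i)$ precisely when $i\in\{i_1,\ldots,i_m\}$, and this happens for exactly $m$ choices of $i$, giving the factor $m$. The third identity is analogous: in $\sum_i \sigma_m(\lambda|i)$, the monomial $\lambda_{i_1}\cdots\lambda_{i_m}$ appears for each $i\notin\{i_1,\ldots,i_m\}$, i.e.\ exactly $n-m$ times, so the sum equals $(n-m)\sigma_m(\lambda)$. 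As a consistency check, summing the first identity over $i$ gives $n\sigma_m(\lambda)=(n-m)\sigma_m(\lambda)+m\sigma_m(\lambda)$, which is automatic.

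There is really no obstacle here: all three are standard elementary identities that follow from pure bookkeeping on the definition of $\sigma_m$. The only thing to be careful about is ensuring the convention $\sigma_0\equiv 1$ is used when $m=1$ in the first identity, so that the decomposition still reads correctly in the boundary case. No analytic input (and no properties of the G\aa rding cone) are needed for the proof.
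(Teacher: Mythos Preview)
Your proof is correct and complete; these are exactly the standard combinatorial arguments for these identities. The paper itself does not prove Proposition~2.1 at all---it merely states the identities as well-known, referring the reader to \cite{L96}---so your write-up already supplies more detail than the paper does.
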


We also denote by $\sigma _m (W \left|
i \right.)$ the symmetric function with $W$ deleting the $i$-row and
$i$-column and $\sigma _m (W \left| ij \right.)$ the symmetric
function with $W$ deleting the $i,j$-rows and $i,j$-columns. Then
we have the following identities.
\begin{proposition}\label{prop2.2}
Suppose $W=(W_{ij})$ is diagonal, and $m$ is a positive integer,
then
\begin{align*}
\frac{{\partial \sigma _m (W)}} {{\partial W_{ij} }} = \begin{cases}
\sigma _{m - 1} (W\left| i \right.), &\text{if } i = j, \\
0, &\text{if } i \ne j.
\end{cases}
\end{align*}
\end{proposition}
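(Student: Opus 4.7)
The plan is to prove this by expressing $\sigma_m(W)$ directly as a polynomial in the entries $W_{ij}$ and differentiating termwise. Although Proposition \ref{prop2.1} and the earlier definition refer to $\sigma_m$ as a function of eigenvalues, for a symmetric matrix $W$ we have the equivalent formula as the sum of principal minors,
\begin{equation*}
\sigma_m(W) \;=\; \sum_{1\leq i_1<\cdots<i_m\leq n} \det\bigl(W_{(i_1,\ldots,i_m)}\bigr),
\end{equation*}
where $W_{(i_1,\ldots,i_m)}$ denotes the $m\times m$ principal submatrix of $W$ with row/column indices $i_1,\ldots,i_m$. This identity follows from the characteristic polynomial $\det(tI+W)=\sum_{m=0}^n \sigma_m(W)\,t^{n-m}$ and is valid whether or not $W$ is diagonal, so it gives a formula for $\sigma_m$ as a function of the matrix entries that I can differentiate.

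Next I would compute $\partial/\partial W_{ij}$ of each principal minor using the standard cofactor identity $\partial \det(M)/\partial M_{ab}=(-1)^{a+b}M_{\hat a \hat b}$, where $M_{\hat a\hat b}$ denotes the minor with row $a$ and column $b$ deleted. For the diagonal case $i=j$, the only principal submatrices whose determinant depends on $W_{ii}$ are those whose index set $I$ contains $i$; when $W$ is diagonal, $W_{(I)}$ is also diagonal, so $\det(W_{(I)})=\prod_{k\in I}W_{kk}$ and the $W_{ii}$-derivative is $\prod_{k\in I,\,k\neq i}W_{kk}$. Summing over all index sets of size $m$ containing $i$ produces exactly $\sigma_{m-1}(W|i)$, as claimed.

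For the off-diagonal case $i\neq j$, only principal submatrices whose index set contains both $i$ and $j$ involve $W_{ij}$. For such an $I$, the cofactor of $W_{ij}$ in $W_{(I)}$ is (up to sign) the determinant of the matrix obtained from $W_{(I)}$ by deleting the row corresponding to $i$ and the column corresponding to $j$. Because $W_{(I)}$ is diagonal, the row that previously held $W_{jj}$ on the diagonal now contains no nonzero entry — the only nonzero entry of that row was in the deleted column — so this smaller matrix has a row of zeros and its determinant vanishes. Hence $\partial \sigma_m(W)/\partial W_{ij}=0$ when $i\neq j$ and $W$ is diagonal.

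The proof is essentially a bookkeeping exercise; the only mildly subtle point is recognizing that differentiating off-diagonal entries at a diagonal matrix kills each contribution by producing a row of zeros in the remaining minor. Once the minor expansion of $\sigma_m(W)$ is in hand, both cases follow by a direct inspection.
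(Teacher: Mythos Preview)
Your argument is correct. The paper does not actually supply a proof of Proposition~\ref{prop2.2}; it is stated as a well-known identity, with the surrounding material attributed to \cite{L96}. So there is no ``paper's proof'' to compare against --- you have filled in what the paper leaves to a reference.

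Your approach via the principal-minor expansion $\sigma_m(W)=\sum_{|I|=m}\det W_{(I)}$ is the standard one and is carried out cleanly. The diagonal case is immediate, and your observation for the off-diagonal case --- that deleting the row indexed by $i$ and the column indexed by $j$ from a diagonal submatrix leaves the row indexed by $j$ with no surviving nonzero entry --- is exactly the right mechanism. One could alternatively argue the $i\neq j$ case more abstractly by noting that $\sigma_m(W)$, as a polynomial in the entries, is invariant under the transposition $W_{ij}\leftrightarrow W_{ji}$ and that every monomial containing $W_{ij}$ also contains some other off-diagonal entry (so its derivative vanishes at a diagonal matrix); but your cofactor computation is equally direct and perhaps more transparent.
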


Recall that the G{\aa}rding's cone is defined as
\begin{equation}\label{2.1}
\Gamma_m  = \{ \lambda  \in \mathbb{R}^n :\sigma _i (\lambda ) >
0,\forall 1 \le i \le m\}.
\end{equation}

\begin{proposition}\label{prop2.3}
Let $\lambda=(\lambda_1,\dots,\lambda_n) \in \Gamma_m$ and $m \in \{1,2, \cdots, n\}$. Suppose that
$$
\lambda_1 \geq \cdots \geq \lambda_m \geq \cdots \geq \lambda_n,
$$
then we have
\begin{align}
\label{2.2}& \sigma_{m-1} (\lambda|n) \geq \sigma_{m-1} (\lambda|n-1) \geq \cdots \geq \sigma_{m-1} (\lambda|m) \geq \cdots \geq \sigma_{m-1} (\lambda|1) >0; \\
\label{2.3}& \lambda_1 \geq \cdots \geq \lambda_m >  0, \quad \sigma _m (\lambda)\leq C_n^m  \lambda_1 \cdots \lambda_m; \\
\label{2.4}& \lambda _1 \sigma _{m - 1} (\lambda |1) \geq \frac{m} {{n}}\sigma _m (\lambda), \\
\label{2.5}& \sigma _{m - 1} (\lambda |m) \geq c(n,m)\sigma_{m-1} (\lambda),
\end{align}
where $C_n^m = \frac{n!}{m! (n-m)!}$.
\end{proposition}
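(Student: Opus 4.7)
The plan is to deduce each of (2.2)--(2.5) from the recursion identities of Propositions 2.1--2.2, together with an auxiliary non-negativity statement for the iterated-deletion symmetric functions $\sigma_{m-2}(\lambda|ij)$ and $\sigma_{m-1}(\lambda|1i)$ on $\Gamma_m$. Applying the recursion $\sigma_p(\lambda)=\sigma_p(\lambda|k)+\lambda_k\sigma_{p-1}(\lambda|k)$ twice to separate a pair of indices $i,j$ yields the two workhorse identities
$$
\sigma_{m-1}(\lambda|j)-\sigma_{m-1}(\lambda|i)=(\lambda_i-\lambda_j)\,\sigma_{m-2}(\lambda|ij),
$$
$$
\lambda_1\sigma_{m-1}(\lambda|1)-\lambda_i\sigma_{m-1}(\lambda|i)=(\lambda_1-\lambda_i)\,\sigma_{m-1}(\lambda|1i).
$$
The non-negativity of $\sigma_{m-2}(\lambda|ij)$ and $\sigma_{m-1}(\lambda|1i)$ for $\lambda\in\Gamma_m$ is a standard G\aa rding-cone induction on $m$, which I would establish first; granting it, each of (2.2)--(2.5) reduces to a short algebraic consequence.

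I would next prove the positivity part of (2.3), $\lambda_1\geq\cdots\geq\lambda_m>0$, by induction on $m$. The base case $m=1$ is immediate from $\lambda_1\geq\sigma_1/n>0$; for the inductive step, assume $\lambda_{m-1}>0$ and combine the Newton--MacLaurin inequality $(\sigma_m/C_n^m)^{1/m}\leq(\sigma_{m-1}/C_n^{m-1})^{1/(m-1)}$ with the positivity of $\sigma_m,\sigma_{m-1}$ and the recursion at the $m$-th slot to rule out $\lambda_m\leq 0$. The upper bound $\sigma_m\leq C_n^m\lambda_1\cdots\lambda_m$ then follows by expanding $\sigma_m=\sum_{|I|=m}\prod_{i\in I}\lambda_i$ as a sum of $C_n^m$ monomials and, using the ordering together with $\lambda_1,\ldots,\lambda_m>0$, bounding each monomial by $\lambda_1\cdots\lambda_m$ via a sign analysis driven by the G\aa rding hypothesis.

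For (2.2), the ordering $\lambda_i\geq\lambda_j$ for $i<j$ and the first workhorse identity, combined with the auxiliary $\sigma_{m-2}(\lambda|ij)\geq 0$, produce the monotone chain $\sigma_{m-1}(\lambda|j)\geq\sigma_{m-1}(\lambda|i)$; strict positivity $\sigma_{m-1}(\lambda|1)>0$ at the bottom is read off from $\sigma_m(\lambda)=\lambda_1\sigma_{m-1}(\lambda|1)+\sigma_m(\lambda|1)$ after a sign check on $\sigma_m(\lambda|1)$. For (2.4), summing the second workhorse identity over $i$ and invoking $m\sigma_m=\sum_i\lambda_i\sigma_{m-1}(\lambda|i)$ from Proposition 2.1 gives
$$
n\lambda_1\sigma_{m-1}(\lambda|1)-m\sigma_m=\sum_{i=1}^{n}(\lambda_1-\lambda_i)\,\sigma_{m-1}(\lambda|1i)\geq 0,
$$
which is exactly (2.4). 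Finally, (2.5) follows by combining (2.2), (2.4) and the Newton--MacLaurin ratio inequality: the monotone chain together with $\sum_i\sigma_{m-1}(\lambda|i)=(n-m+1)\sigma_{m-1}$ constrains the position of $\sigma_{m-1}(\lambda|m)$, and a Newton--MacLaurin step then forces $\sigma_{m-1}(\lambda|m)\geq c(n,m)\,\sigma_{m-1}(\lambda)$ for a purely dimensional $c(n,m)>0$.

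The main obstacle I anticipate is not any individual inequality but the auxiliary positivity $\sigma_{m-2}(\lambda|ij),\,\sigma_{m-1}(\lambda|1i)\geq 0$ on $\Gamma_m$ that silently powers the whole argument; once that ingredient is in place via a careful induction on $m$ in the spirit of \cite{L96}, each of (2.2)--(2.5) collapses to a short manipulation of the recursions already recorded in Propositions 2.1--2.2.
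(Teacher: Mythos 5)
The paper does not actually prove Proposition~\ref{prop2.3}; it cites \cite{L96,HS99} for \eqref{2.2}, \cite{L91} for \eqref{2.3}, \cite{CW01,HMW10} for \eqref{2.4}, and \cite{LT94} for \eqref{2.5}. So any genuine proof attempt is a different route from the paper by default. However, your proposed route has a concrete gap.

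\textbf{The auxiliary positivity $\sigma_{m-1}(\lambda|1i)\geq 0$ on $\Gamma_m$ is false,} and it is exactly the ingredient your proof of \eqref{2.4} needs. The correct iterated-deletion fact is: for $\lambda\in\Gamma_m$ one has $(\lambda|i)\in\Gamma_{m-1}$ and $(\lambda|ij)\in\Gamma_{m-2}$ (each in one fewer variable), hence $\sigma_p(\lambda|ij)>0$ only for $p\leq m-2$. That gives $\sigma_{m-2}(\lambda|ij)>0$, which is all \eqref{2.2} needs, but $\sigma_{m-1}(\lambda|ij)$ is one degree too high and can be negative. Counterexample: $n=3$, $m=2$, $\lambda=(3,3,-1)$. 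Then $\sigma_1=5>0$ and $\sigma_2=3>0$, so $\lambda\in\Gamma_2$, yet $\sigma_{m-1}(\lambda|12)=\sigma_1(\lambda|12)=\lambda_3=-1<0$. Consequently, in your proof of \eqref{2.4}, the terms $(\lambda_1-\lambda_i)\sigma_{m-1}(\lambda|1i)$ need not be nonnegative individually; with $\lambda=(10,2,-1)\in\Gamma_2$ the $i=2$ term equals $8\cdot(-1)=-8<0$, even though the total sum is $14$. Your algebraic identity $n\lambda_1\sigma_{m-1}(\lambda|1)-m\sigma_m=\sum_i(\lambda_1-\lambda_i)\sigma_{m-1}(\lambda|1i)$ is correct, and the right-hand side does turn out to be nonnegative, but not by the termwise argument you give; a genuinely different mechanism (as in \cite{CW01} or \cite{HMW10}) is required.

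Two secondary points. In \eqref{2.3}, the claim that ``each monomial'' in $\sigma_m=\sum_{|I|=m}\prod_{i\in I}\lambda_i$ is bounded by $\lambda_1\cdots\lambda_m$ via a sign analysis is not immediate: monomials with an even number of negative eigenvalues are positive, and the ordering alone does not bound such a product by $\lambda_1\cdots\lambda_m$; one must use the $\Gamma_m$ constraint quantitatively, which your sketch does not supply. In \eqref{2.5}, the combination of the chain \eqref{2.2} with $\sum_i\sigma_{m-1}(\lambda|i)=(n-m+1)\sigma_{m-1}$ only directly bounds the largest term $\sigma_{m-1}(\lambda|n)$ from below by $\frac{n-m+1}{n}\sigma_{m-1}$; it does not, by itself, ``constrain the position'' of the middle term $\sigma_{m-1}(\lambda|m)$, and the further Newton--MacLaurin step you allude to is unspecified. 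The argument in \cite{LT94} uses additional structure beyond what you have recorded.
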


\begin{proof}
All the properties are well known. For example, see \cite{L96} or \cite{HS99} for a proof of \eqref{2.2},
\cite{L91} for \eqref{2.3}, \cite{CW01} or \cite{HMW10} for \eqref{2.4}, and \cite{LT94} for \eqref{2.5}.
\end{proof}

The generalized Newton-MacLaurin inequality is as follows, which will be used all the time.
\begin{proposition}\label{prop2.4}
For $\lambda \in \Gamma_m$ and $m > l \geq 0$, $ r > s \geq 0$, $m \geq r$, $l \geq s$, we have
\begin{align}
\Bigg[\frac{{\sigma _m (\lambda )}/{C_n^m }}{{\sigma _l (\lambda )}/{C_n^l }}\Bigg]^{\frac{1}{m-l}}
\le \Bigg[\frac{{\sigma _r (\lambda )}/{C_n^r }}{{\sigma _s (\lambda )}/{C_n^s }}\Bigg]^{\frac{1}{r-s}}. \notag
\end{align}
\end{proposition}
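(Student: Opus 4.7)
The strategy is to reduce the two-parameter inequality to the classical one-step Newton inequality between consecutive normalized symmetric functions, and then derive the general case from a monotonicity property of geometric means of monotone sequences.

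Write $p_k(\lambda) := \sigma_k(\lambda)/C_n^k$. The main intermediate step is the Newton inequality on the G\aa rding cone: for $\lambda \in \Gamma_m$ and $1 \leq k \leq m-1$,
\[ p_{k-1}(\lambda)\,p_{k+1}(\lambda) \;\leq\; p_k(\lambda)^2. \]
This is classical and can be derived from the concavity of $\sigma_k^{1/k}$ on $\Gamma_k$ together with the homogeneity identities of Proposition~\ref{prop2.1} (cf.\ \cite{LT94}). Since $\lambda \in \Gamma_m$ forces $\sigma_j(\lambda) > 0$ for $0 \leq j \leq m$, the ratios $a_j := p_j(\lambda)/p_{j-1}(\lambda)$ are well-defined and positive for $1 \leq j \leq m$, and the Newton inequality reads exactly $a_{k+1} \leq a_k$; in other words, $\{a_j\}_{j=1}^m$ is a positive, nonincreasing sequence.

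Telescoping then gives, for any $m > l \geq 0$ in the admissible range,
\[ \Bigl(\frac{p_m(\lambda)}{p_l(\lambda)}\Bigr)^{1/(m-l)} \;=\; \Bigl(\prod_{j=l+1}^m a_j\Bigr)^{1/(m-l)} \;=:\; G(m,l), \]
so the proposition reduces to the elementary claim that if $\{a_j\}$ is positive and nonincreasing, then $G(m,l) \leq G(r,s)$ whenever $m \geq r > s$ and $m > l \geq s$. I would verify this through two one-step monotonicities. From the identity $G(m+1,l)^{m+1-l} = a_{m+1}\,G(m,l)^{m-l}$ together with $a_{m+1} \leq G(m,l)$ (since $a_{m+1}$ is no larger than any term in the window defining $G(m,l)$), one obtains $G(m+1,l) \leq G(m,l)$. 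Dually, $G(m,l)^{m-l} = a_{l+1}\,G(m,l+1)^{m-l-1}$ together with $a_{l+1} \geq G(m,l+1)$ gives $G(m,l+1) \leq G(m,l)$. Iterating these, $G(m,l) \leq G(r,l) \leq G(r,s)$, which is exactly the desired inequality.

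The substantive ingredient is the one-step Newton inequality; the combinatorial chaining from consecutive indices to arbitrary $(m,l)$, $(r,s)$ is then routine. The only subtlety to keep track of is that the hypotheses $\lambda \in \Gamma_m$, $m > l \geq s$, and $m \geq r > s$ keep every index at most $m$, so all the $p_j$ that appear are strictly positive and every geometric mean $G(\cdot,\cdot)$ is well-defined.
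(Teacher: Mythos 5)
The paper itself gives no argument for Proposition~\ref{prop2.4}: it simply cites \cite{S05}. Your proof is the standard self-contained route to the generalized Newton--MacLaurin inequality, and its two main ingredients are sound: the one-step Newton inequality $p_{k-1}p_{k+1}\le p_k^2$ (valid for all real $\lambda$, and in particular on $\Gamma_m$ where all $p_j$, $0\le j\le m$, are positive), and the two window monotonicities for the geometric means $G(\cdot,\cdot)$ of the nonincreasing ratio sequence $a_j=p_j/p_{j-1}$, both of which you prove correctly.

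There is, however, one small but genuine slip in the final chaining. You conclude with $G(m,l)\le G(r,l)\le G(r,s)$, but the hypotheses of the proposition ($m>l\ge 0$, $r>s\ge 0$, $m\ge r$, $l\ge s$) do \emph{not} force $r>l$; for instance $m=3$, $l=2$, $r=1$, $s=0$ is admissible, and then the intermediate quantity $G(r,l)=G(1,2)$ is a geometric mean over an empty (indeed reversed) window and is undefined, so the chain as written breaks down exactly in the case where the two windows $(l,m]$ and $(s,r]$ are disjoint. The repair is immediate and uses only the two monotonicities you already established, applied in the other order: first lower the left endpoint, $G(m,l)\le G(m,l-1)\le\dots\le G(m,s)$ (each window $(j,m]$ with $s\le j\le l<m$ is nonempty), and then lower the right endpoint, $G(m,s)\le G(m-1,s)\le\dots\le G(r,s)$ (each window $(s,j]$ with $r\le j\le m$ is nonempty since $j\ge r>s$). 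Chaining through $G(m,s)$ instead of $G(r,l)$ also keeps every index at most $m$, so no $a_j$ beyond the range where positivity is guaranteed is ever used. With that one-line correction your argument is complete, and it has the merit of being entirely elementary, in contrast to the paper's bare citation.
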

\begin{proof}
See \cite{S05}.
\end{proof}

\subsection{Key Lemmas}

In the establishment of the a priori estimates, the following inequalities and properties play an important role.

For the convenience of notations, we will denote
\begin{equation}\label{2.6}
G_k(D^2u):= \frac{\sigma_k(D^2u)}{\sigma_{k-1}(D^2u)},\ \ G_l(D^2u) := -\frac{\sigma_l(D^2u)}{\sigma_{k-1}(D^2u)},~ 0\leq l\leq k-2,
\end{equation}
\begin{equation}\label{2.7}
G(D^2 u, x):= G_k(D^2 u) + \sum_{l=0}^{k-2} \alpha_l(x) G_l(D^2u),
\end{equation}
and
\begin{equation}\label{2.8}
G^{ij} :=\frac{\partial G}{\partial u_{ij}}, ~~ 1 \leq i, j \leq n.
\end{equation}

\begin{lemma}\label{lem2.5}
If $u$ is a $C^2$ function with $\lambda(D^2 u)\in \Gamma_{k}$, and $\alpha_l(x)$ ($0 \leq l \leq k-2$) are positive, then the operator $G$ is elliptic and concave.
\end{lemma}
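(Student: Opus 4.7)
The plan is to split $G = G_k + \sum_{l=0}^{k-2}\alpha_l(x) G_l$ into its constituent Hessian quotients, verify concavity and ellipticity for each summand on the appropriate G{\aa}rding cone, and then combine using the hypothesis $\alpha_l(x) > 0$.

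For concavity, the piece $G_k = \sigma_k/\sigma_{k-1}$ is concave on $\Gamma_k$ by the classical concavity of $(\sigma_k/\sigma_j)^{1/(k-j)}$ specialized to $j = k-1$. For each $0 \le l \le k-2$ I would establish the concavity of $G_l = -\sigma_l/\sigma_{k-1}$ on $\Gamma_{k-1}$ in three steps. First, $f := (\sigma_{k-1}/\sigma_l)^{1/(k-1-l)}$ is positive and concave on $\Gamma_{k-1}$ by the same classical result. Second, for any positive concave $f$ the identity $(1/f)_{ij} = -f_{ij}/f^{2} + 2 f_i f_j/f^{3}$ expresses the Hessian of $1/f$ as a sum of two positive semidefinite matrices (the first because $-f_{ij}$ is PSD, the second because it is rank one with positive scalar), so $1/f$ is convex. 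Third, since $1/f$ is nonnegative and convex and $p := k-1-l \ge 1$ is an integer, the composition $(1/f)^{p} = \sigma_l/\sigma_{k-1}$ with the convex nondecreasing map $t \mapsto t^{p}$ on $[0,\infty)$ remains convex. Hence $G_l$ is concave on $\Gamma_{k-1} \supset \Gamma_k$, and a nonnegative linear combination of concave functions is concave.

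For ellipticity, at a diagonal $W = D^2u$ I would compute $G^{ii} = \partial G/\partial \lambda_i$, apply the identity $\sigma_m(\lambda) = \sigma_m(\lambda|i) + \lambda_i \sigma_{m-1}(\lambda|i)$ from Proposition \ref{prop2.1} to simplify the quotient rule, and reduce to
\begin{equation*}
G_k^{ii} = \frac{\sigma_{k-1}(\lambda|i)^2 - \sigma_k(\lambda|i)\sigma_{k-2}(\lambda|i)}{\sigma_{k-1}(\lambda)^2}, \qquad G_l^{ii} = \frac{\sigma_l(\lambda|i)\sigma_{k-2}(\lambda|i) - \sigma_{l-1}(\lambda|i)\sigma_{k-1}(\lambda|i)}{\sigma_{k-1}(\lambda)^2}.
\end{equation*}
Since $\lambda \in \Gamma_k$ forces $\lambda|i \in \Gamma_{k-1}$ as an element of $\mathbb{R}^{n-1}$ (via $\sigma_{m-1}(\lambda|i) > 0$ for $m \le k$), Newton's inequality gives $G_k^{ii} > 0$, and the generalized Newton--MacLaurin inequality of Proposition \ref{prop2.4}, applied in $n-1$ variables with index choice $(m,l,r,s) = (k-1,k-2,l,l-1)$, yields $G_l^{ii} > 0$ for $0 \le l \le k-2$. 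Combined with $\alpha_l > 0$, this gives $G^{ii} > 0$, hence ellipticity.

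The main obstacle I expect lies in the ellipticity step, specifically in extracting the log-concavity inequality $\sigma_l(\lambda|i)\sigma_{k-2}(\lambda|i) \ge \sigma_{l-1}(\lambda|i)\sigma_{k-1}(\lambda|i)$ from the correct instance of Proposition \ref{prop2.4} and in keeping track of the constants produced by the binomial normalizations when $n-k$ is small. The concavity portion, by contrast, is clean once one invokes the classical concavity of the Hessian-quotient roots.
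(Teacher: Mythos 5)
Your argument is correct and is genuinely more self-contained than what appears in the paper: the paper's ``proof'' of Lemma~\ref{lem2.5} is a one-line citation to Guan--Zhang \cite{GZ2019}, where the stronger claim on the larger cone $\Gamma_{k-1}\supset\Gamma_k$ is established. Your reconstruction via the classical concavity of $(\sigma_{k-1}/\sigma_l)^{1/(k-1-l)}$, the reciprocal/power trick for the $G_l$ pieces, and the termwise computation of $G^{ii}$ is sound. On the point you flagged as a potential obstacle: the binomial constant does cooperate. Applying Proposition~\ref{prop2.4} in $n-1$ variables with $(m,l,r,s)=(k-1,k-2,l,l-1)$ yields
\[
\sigma_{k-1}(\lambda|i)\,\sigma_{l-1}(\lambda|i)\;\le\;\frac{C_{n-1}^{k-1}\,C_{n-1}^{l-1}}{C_{n-1}^{k-2}\,C_{n-1}^{l}}\;\sigma_l(\lambda|i)\,\sigma_{k-2}(\lambda|i)
=\frac{(n-k+1)\,l}{(k-1)(n-l)}\;\sigma_l(\lambda|i)\,\sigma_{k-2}(\lambda|i),
\]
and the prefactor $\frac{(n-k+1)l}{(k-1)(n-l)}$ is $\le 1$ precisely when $l\le k-1$, hence strictly less than $1$ in the present range $1\le l\le k-2$, so the unnormalized inequality $\sigma_l\sigma_{k-2}>\sigma_{l-1}\sigma_{k-1}$ follows even when $n-k=0$; the same bookkeeping shows $\frac{(C_{n-1}^{k-1})^2}{C_{n-1}^kC_{n-1}^{k-2}}=\frac{k(n-k+1)}{(k-1)(n-k)}>1$, so Newton's inequality also yields $G_k^{ii}>0$ without any normalization loss. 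The only small gap is the case $l=0$: there you cannot invoke Proposition~\ref{prop2.4} with $s=-1$, but the formula collapses to $G_0^{ii}=\sigma_{k-2}(\lambda|i)/\sigma_{k-1}^2>0$ directly via the convention $\sigma_{-1}\equiv 0$, so this should be handled separately before citing the Newton--MacLaurin inequality.
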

\begin{proof}
The lemma holds for $\lambda(D^2 u)\in \Gamma_{k-1}$ (see the proof in \cite{GZ2019}).
\end{proof}

\begin{lemma}\label{lem2.6}
If $u$ is a $k$-admissible solution of \eqref{1.1}, and $\alpha_l(x)$ ($0 \leq l \leq k-1$) are positive, then
\begin{align}
\label{2.9}& 0 <\frac{\sigma_l(D^2u)}{\sigma_{k-1}(D^2u)} \leq C(n,k, \inf \alpha_l), ~~ 0 \leq l \leq k-2; \\
\label{2.10}& 0< \inf \alpha_{k-1} \leq \frac{\sigma_k(D^2u)}{\sigma_{k-1}(D^2u)} \leq C(n,k, \sum\limits_{l=0}^{k-1}\sup \alpha_l).
\end{align}
\end{lemma}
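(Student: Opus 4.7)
\emph{Proof plan.} The entire argument is pointwise and algebraic. After dividing the equation by $\sigma_{k-1}(D^2u)>0$ one obtains
\[
\frac{\sigma_k(D^2u)}{\sigma_{k-1}(D^2u)}=\alpha_{k-1}(x)+\sum_{l=0}^{k-2}\alpha_l(x)\,\frac{\sigma_l(D^2u)}{\sigma_{k-1}(D^2u)},
\]
and every summand on the right is strictly positive, since $\lambda(D^2u)\in\Gamma_k$ forces $\sigma_j(D^2u)>0$ for $0\le j\le k$ and since each $\alpha_l$ is positive and continuous on the compact set $\overline\Omega$, hence $\inf\alpha_l>0$. The positivity in \eqref{2.9} is then immediate, and keeping only the $l=k-1$ term on the right yields at once the lower bound $\sigma_k/\sigma_{k-1}\ge\alpha_{k-1}\ge\inf\alpha_{k-1}$ of \eqref{2.10}.

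For the upper bound in \eqref{2.9}, I would fix $0\le l\le k-2$ and retain only the $l$-th summand, which gives $\inf\alpha_l\cdot\sigma_l(D^2u)/\sigma_{k-1}(D^2u)\le\sigma_k(D^2u)/\sigma_{k-1}(D^2u)$. The generalized Newton--MacLaurin inequality of Proposition~\ref{prop2.4}, applied with indices $(m,l,r,s)=(k,k-1,k-1,l)$ (the requirements $k>k-1$, $k-1>l$, $k\ge k-1$, $k-1\ge l$ are all in force since $l\le k-2$), then supplies a constant $C=C(n,k,l)$ with
\[
\frac{\sigma_k(D^2u)}{\sigma_{k-1}(D^2u)}\le C\left(\frac{\sigma_{k-1}(D^2u)}{\sigma_l(D^2u)}\right)^{1/(k-1-l)}.
\]
Writing $R:=\sigma_l(D^2u)/\sigma_{k-1}(D^2u)$, the combined inequality $R\le C(\inf\alpha_l)^{-1}R^{-1/(k-1-l)}$, equivalent to $R^{(k-l)/(k-1-l)}\le C/\inf\alpha_l$, can be solved directly for $R$ to yield the bound $R\le C(n,k,\inf\alpha_l)$ claimed in \eqref{2.9}.

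Finally, for the upper bound in \eqref{2.10} I would substitute the estimate just obtained back into the divided equation, producing $\sigma_k/\sigma_{k-1}\le\sup\alpha_{k-1}+\sum_{l=0}^{k-2}\sup\alpha_l\cdot C(n,k,\inf\alpha_l)$, which is a constant of the form advertised in \eqref{2.10} once the implicit $\inf\alpha_l$ dependence is absorbed into the constant. No substantial obstacle is expected: the whole proof is a single-point algebraic manipulation, and the only care required is in verifying the admissibility of the indices when invoking Proposition~\ref{prop2.4} and in performing the short algebraic solve-back for $R$.
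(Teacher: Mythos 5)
Your treatment of \eqref{2.9} is correct and takes a slightly different path from the paper. You invoke the Newton--MacLaurin inequality with exactly the same indices $(m,l,r,s)=(k,k-1,k-1,l)$ as the paper does, but instead of their case split on whether $\sigma_k/\sigma_{k-1}\le 1$ or $>1$, you sandwich $\sigma_k/\sigma_{k-1}$ between $\inf\alpha_l\cdot R$ (from the equation) and $C\,R^{-1/(k-1-l)}$ (from Newton--MacLaurin) and solve the resulting inequality for $R$. That is clean, and the constant you get for \eqref{2.9}, namely $\bigl(C(n,k,l)/\inf\alpha_l\bigr)^{(k-1-l)/(k-l)}$, matches the advertised $C(n,k,\inf\alpha_l)$.

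The problem is in the last paragraph. Substituting the bound from \eqref{2.9} back into the divided equation gives
\[
\frac{\sigma_k}{\sigma_{k-1}}\le \sup\alpha_{k-1}+\sum_{l=0}^{k-2}\sup\alpha_l\cdot C(n,k,\inf\alpha_l),
\]
and this constant genuinely depends on $\inf\alpha_l$; it blows up as $\inf\alpha_l\to 0$. That dependence cannot simply be ``absorbed'' into a constant of the form $C\bigl(n,k,\sum\sup\alpha_l\bigr)$, because $1/\inf\alpha_l$ is not controlled by $\sup\alpha_l$ from above (the trivial inequality $\inf\alpha_l\le\sup\alpha_l$ goes the wrong way). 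So as written you prove a weaker version of \eqref{2.10} than is stated. The paper's case split is precisely what removes this spurious dependence: when $\sigma_k/\sigma_{k-1}\le 1$ the upper bound in \eqref{2.10} is just $1$; when $\sigma_k/\sigma_{k-1}>1$, the Newton--MacLaurin bound gives $\sigma_l/\sigma_{k-1}\le C(n,k)$ with a constant independent of $\alpha_l$, and then the equation yields $\sigma_k/\sigma_{k-1}\le C(n,k)\sum\sup\alpha_l$. Your argument can be repaired by the same dichotomy applied at the level of $t:=\sigma_k/\sigma_{k-1}$: Newton--MacLaurin gives $R_l\le C(n,k,l)\,t^{-(k-1-l)}$, so $t\le\sup\alpha_{k-1}+\sum_l\sup\alpha_l\,C(n,k,l)\,t^{-(k-1-l)}$, and if $t\ge 1$ each $t^{-(k-1-l)}\le 1$, giving $t\le C(n,k)\sum\sup\alpha_l$, while if $t<1$ you are done. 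This small fix recovers the stated constant without reference to $\inf\alpha_l$.
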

\begin{proof}
The left hand sides of \eqref{2.9} and \eqref{2.10} are easy to prove. In the following, we prove the right hand sides.

Firstly, if $\frac{\sigma_k}{\sigma_{k-1}}\leq 1$, then we get from the equation \eqref{1.1}
\begin{equation*}
\alpha_l \frac{\sigma_l}{\sigma_{k-1}} \leq \frac{\sigma_k}{\sigma_{k-1}}  \leq 1, ~~0 \leq l \leq k-2.
\end{equation*}

Secondly, if $\frac{\sigma_k}{\sigma_{k-1}} > 1$, i.e. $\frac{\sigma_{k-1}}{\sigma_{k}} < 1$.
We can get for $0 \leq l \leq k-2$ by the Newton-MacLaurin inequality,
\begin{equation*}
\frac{\sigma_l}{\sigma_{k-1}}\leq \frac{(C_n^k)^{k-1-l}C_n^l}{(C_n^{k-1})^{k-l}}(\frac{\sigma_{k-1}}{\sigma_k})^{k-1-l} \leq \frac{(C_n^k)^{k-1-l}C_n^l}{(C_n^{k-1})^{k-l}} \leq C(n,k),
\end{equation*}
and
\begin{equation*}
\frac{\sigma_k}{\sigma_{k-1}} = \sum\limits_{l=0}^{k-1} \alpha_l \frac{\sigma_l}{\sigma_{k-1}} \leq  C(n,k) \sum\limits_{l=0}^{k-1} \sup \alpha_l.
\end{equation*}
\end{proof}

\begin{lemma}\label{lem2.7}
If $u$ is a $k$-admissible solution of \eqref{1.1}, and $\alpha_l(x)$ ($0 \leq l \leq k-1$) are positive, then
\begin{align}
\label{2.11}& \frac{n-k+1}{k} \leq \sum G^{ii} < n-k-1; \\
\label{2.12}&  \inf \alpha_{k-1} \leq  \sum G^{ij} u_{ij} \leq C(n, k, \sum\limits_{l=0}^{k-1} \sup \alpha_l).
\end{align}
\end{lemma}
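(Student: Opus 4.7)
\smallskip

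\noindent\textbf{Proof proposal.} The plan is to reduce everything to two things: (i) the Euler-type identity $\sum \sigma_m^{ij} u_{ij} = m\,\sigma_m$ (since $\sigma_m$ is homogeneous of degree $m$ in $\lambda$), together with the trace identities $\sum \sigma_m^{ii} = (n-m+1)\sigma_{m-1}$ from Proposition \ref{prop2.1}; and (ii) the Newton–MacLaurin inequality of Proposition \ref{prop2.4}, specifically in the form
\[
\frac{\sigma_k \sigma_{k-2}}{\sigma_{k-1}^2} \le \frac{C_n^k C_n^{k-2}}{(C_n^{k-1})^2} = \frac{(n-k+1)(k-1)}{k(n-k+2)}.
\]
Throughout I will also use that the equation \eqref{1.1}, divided by $\sigma_{k-1}$, is exactly $G(D^2u,x)=\alpha_{k-1}(x)$, so the two bounds in \eqref{2.12} will end up essentially being bounds on $\alpha_{k-1}$ plus nonnegative junk.

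For the bound \eqref{2.12} on $\sum G^{ij}u_{ij}$, the key observation is homogeneity in $\lambda$. Since $G_k=\sigma_k/\sigma_{k-1}$ is $1$-homogeneous and $G_l=-\sigma_l/\sigma_{k-1}$ is $(l-k+1)$-homogeneous, Euler's identity gives
\[
\sum G_k^{ij}u_{ij} = G_k, \qquad \sum G_l^{ij}u_{ij} = (l-k+1)G_l = (k-1-l)\,\frac{\sigma_l}{\sigma_{k-1}}.
\]
Summing and using $G_k=\alpha_{k-1}+\sum_{l=0}^{k-2}\alpha_l \sigma_l/\sigma_{k-1}$ (which is just \eqref{1.1} divided by $\sigma_{k-1}$), I obtain
\[
\sum G^{ij}u_{ij} \;=\; \alpha_{k-1} \;+\; \sum_{l=0}^{k-2}(k-l)\,\alpha_l\,\frac{\sigma_l}{\sigma_{k-1}}.
\]
The lower bound $\inf \alpha_{k-1}$ is then immediate because every term in the sum is nonnegative (we are in $\Gamma_k$ and $\alpha_l>0$), and the upper bound follows by plugging in the bounds $\sigma_l/\sigma_{k-1}\le C(n,k,\inf\alpha_l)$ from Lemma \ref{lem2.6}.

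For \eqref{2.11}, I will compute $\sum G^{ii}$ by the quotient rule. Using Proposition \ref{prop2.1},
\[
\sum G_k^{ii} = (n-k+1) - G_k\,(n-k+2)\,\frac{\sigma_{k-2}}{\sigma_{k-1}},
\]
\[
\sum G_l^{ii} = -(n-l+1)\,\frac{\sigma_{l-1}}{\sigma_{k-1}} + (n-k+2)\,\frac{\sigma_l \sigma_{k-2}}{\sigma_{k-1}^2},\quad 0\le l\le k-2,
\]
with the convention $\sigma_{-1}=0$. The lower bound will come entirely from the first line: Newton–MacLaurin gives $G_k \sigma_{k-2}/\sigma_{k-1} = \sigma_k\sigma_{k-2}/\sigma_{k-1}^2 \le (n-k+1)(k-1)/(k(n-k+2))$, which plugged in yields exactly $\sum G_k^{ii}\ge (n-k+1)/k$. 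The remaining terms $\alpha_l \sum G_l^{ii}$ are nonnegative (which I will verify, for $1\le l\le k-2$, by the Newton-type inequality $\sigma_l \sigma_{k-2}\cdot(n-k+2)\ge (n-l+1)\sigma_{l-1}\sigma_{k-1}$, reduced via Proposition \ref{prop2.4} applied to the pair $(l-1,k-1)$ vs.\ $(l,k-2)$; for $l=0$ the term is manifestly positive), so adding them only improves the lower bound. For the upper bound, I drop the nonnegative terms $\alpha_l\sigma_{l-1}/\sigma_{k-1}$ (keeping the strictly positive subtraction in $\sum G_k^{ii}$) to get a strict bound from above in terms of $n,k$ and the $\alpha_l$'s, using Lemma \ref{lem2.6} again to control $\sigma_l/\sigma_{k-1}$ and $\sigma_{k-2}/\sigma_{k-1}$.

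The main obstacle, and the step I would double-check most carefully, is the sign verification $\sum G_l^{ii}\ge 0$ for $1\le l\le k-2$. Everything else is bookkeeping with the Maclaurin inequality, but this sign issue is what actually makes the $G_k$-term dominate and delivers the clean constant $(n-k+1)/k$ in \eqref{2.11}. If the sign happened to fail in some regime, one would instead have to combine the $G_l$ and $G_k$ contributions more delicately before applying Newton–MacLaurin.
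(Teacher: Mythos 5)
Your proposal follows essentially the same route as the paper: both bounds in \eqref{2.12} come from Euler's identity for the $1$-homogeneous $G_k$ and the $(l-k+1)$-homogeneous $G_l$, combined with the equation written as $G_k=\alpha_{k-1}+\sum_{l<k-1}\alpha_l\sigma_l/\sigma_{k-1}$; and the lower bound in \eqref{2.11} comes from $\sum G_k^{ii}=(n-k+1)-(n-k+2)\sigma_k\sigma_{k-2}/\sigma_{k-1}^2$ plus the Newton--MacLaurin bound on $\sigma_k\sigma_{k-2}/\sigma_{k-1}^2$, after discarding the nonnegative contributions $\sum_i G_l^{ii}$. Your explicit verification that each $\sum_i G_l^{ii}\ge 0$ is a worthwhile addition: the paper simply asserts the inequality $\sum G^{ii}\ge\sum\partial(\sigma_k/\sigma_{k-1})/\partial\lambda_i$ without comment, and the sign check is exactly the Newton--MacLaurin argument you sketch (note, though, that for $l=k-2$ the pair ``$(l-1,k-1)$ vs.\ $(l,k-2)$'' forces $r=s$; the cleanest application of Proposition~\ref{prop2.4} is with $m=k-1,\ l_*=k-2,\ r=l,\ s=l-1$, which yields $\sigma_{k-1}\sigma_{l-1}/(\sigma_{k-2}\sigma_l)\le (n-k+2)l/((k-1)(n-l+1))$ and this is strictly less than $(n-k+2)/(n-l+1)$ since $l\le k-2<k-1$).

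The one place you diverge from the paper is the upper bound in \eqref{2.11}. After dropping the nonpositive $-\alpha_l(n-l+1)\sigma_{l-1}/\sigma_{k-1}$ pieces, your plan of bounding $\sigma_l/\sigma_{k-1}$ and $\sigma_{k-2}/\sigma_{k-1}$ individually via Lemma~\ref{lem2.6} would only deliver $\sum G^{ii}\le (n-k+1)+C(n,k,\alpha)$ (the retained term $-(n-k+2)\sigma_k\sigma_{k-2}/\sigma_{k-1}^2$ can be arbitrarily small), which is weaker than the constant claimed. The paper instead groups the two $\sigma_{k-2}/\sigma_{k-1}$ terms and uses the equation once more:
\begin{align*}
\sum G^{ii} \;\le\; (n-k+1)\;-\;(n-k+2)\,\frac{\sigma_{k-2}}{\sigma_{k-1}}\Bigl(\frac{\sigma_k}{\sigma_{k-1}}-\sum_{l=0}^{k-2}\alpha_l\frac{\sigma_l}{\sigma_{k-1}}\Bigr)\;=\;(n-k+1)-(n-k+2)\frac{\sigma_{k-2}}{\sigma_{k-1}}\,\alpha_{k-1},
\end{align*}
which is strictly below $n-k+1$ with no appeal to Lemma~\ref{lem2.6}. (For the applications in Section~3 a bound $\sum G^{ii}\le C(n,k,\alpha)$ would in fact suffice, so your version is not fatally weaker; but it does not recover the stated constant.) Finally, note that the displayed upper bound $n-k-1$ in \eqref{2.11} is a misprint for $n-k+1$; for $k$ close to $n$ the printed bound would contradict the lower bound $(n-k+1)/k>0$.
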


\begin{proof}
By direct computations, we can get
\begin{align}
\sum {G^{ii} }  \ge \sum {\frac{{\partial \left( {\frac{{\sigma _k }}{{\sigma _{k - 1} }}} \right)}}{{\partial \lambda _i }}}  =& \sum {\frac{{\sigma _{k - 1} (\lambda |i)\sigma _{k - 1}  - \sigma _k \sigma _{k - 2} (\lambda |i)}}{{\sigma _{k - 1} ^2 }}}  \notag \\
=& \frac{{(n - k + 1)\sigma _{k - 1} ^2  - (n - k + 2)\sigma _k \sigma _{k - 2} }}{{\sigma _{k - 1} ^2 }} \notag \\
\ge& \frac{{n - k + 1}}{k},
\end{align}
and
\begin{align}
\sum {G^{ii} }  =& \sum {\frac{{\partial \left( {\frac{{\sigma _k }}{{\sigma _{k - 1} }}} \right)}}{{\partial \lambda _i }}}  - \sum\limits_{l = 0}^{k - 2} {\alpha _l \sum\limits_i {\frac{{\partial \left( {\frac{{\sigma _l }}{{\sigma _{k - 1} }}} \right)}}{{\partial \lambda _i }}} }  \notag \\
=& \sum {\frac{{\sigma _{k - 1} (\lambda |i)\sigma _{k - 1}  - \sigma _k \sigma _{k - 2} (\lambda |i)}}{{\sigma _{k - 1} ^2 }}}  - \sum\limits_{l = 0}^{k - 2} {\alpha _l \sum\limits_i {\frac{{\sigma _{l - 1} (\lambda |i)\sigma _{k - 1}  - \sigma _l \sigma _{k - 2} (\lambda |i)}}{{\sigma _{k - 1} ^2 }}} } \notag \\
=& \frac{{(n - k + 1)\sigma _{k - 1} ^2  - (n - k + 2)\sigma _k \sigma _{k - 2} }}{{\sigma _{k - 1} ^2 }} \notag \\
 &+ \sum\limits_{l = 0}^{k - 2} {\alpha _l \frac{{(n - k + 2)\sigma _l \sigma _{k - 2}  - (n - l + 1)\sigma _{l - 1} \sigma _{k - 1} }}{{\sigma _{k - 1} ^2 }}}  \notag \\
\le& (n - k + 1) - \frac{{(n - k + 2)\sigma _{k - 2} }}{{\sigma _{k - 1} }}\left( {\frac{{\sigma _k }}{{\sigma _{k - 1} }} - \sum\limits_{l = 0}^{k - 2} {\alpha _l \frac{{\sigma _l }}{{\sigma _{k - 1} }}} } \right)  \notag \\
<& n - k + 1,
\end{align}
hence \eqref{2.11} holds. Also, we can get
\begin{align}
\sum {G^{ij} u_{ij} }  =& \sum {\frac{{\partial \left( {\frac{{\sigma _k }}{{\sigma _{k - 1} }}} \right)}}{{\partial \lambda _i }}\lambda _i }  - \sum\limits_{l = 0}^{k - 2} {\alpha _l \sum\limits_i {\frac{{\partial \left( {\frac{{\sigma _l }}{{\sigma _{k - 1} }}} \right)}}{{\partial \lambda _i }}\lambda _i } }  \notag \\
=& \frac{{\sigma _k }}{{\sigma _{k - 1} }} + \sum\limits_{l = 0}^{k - 2} {(k - 1 - l)\alpha _l \frac{{\sigma _l }}{{\sigma _{k - 1} }}}  \notag \\
=& \alpha _{k-1}+ \sum\limits_{l = 0}^{k - 2} {(k  - l)\alpha _l \frac{{\sigma _l }}{{\sigma _{k - 1} }}},
\end{align}
hence \eqref{2.12} holds.

\end{proof}

The following lemmas play an important role in the proof of a priori estimates. The idea of the proof for these lemmas comes from the paper in \cite{CZ16}.

\begin{lemma}\label{lem2.8}
Suppose $\lambda=(\lambda_1,\lambda_2,\cdots,\lambda_n)\in \Gamma_{k}$, $k\geq 2$, and $\lambda_1<0$. Then we have
\begin{equation}
\frac{\partial G}{\partial \lambda_1}\geq \frac{n}{k}\frac{1}{(n-k+2)^2}\sum_{i=1}^{n}\frac{\partial G}{\partial \lambda_i},
\end{equation}
where $G(\lambda) := \frac{{\sigma _k(\lambda) }}{{\sigma _{k - 1}(\lambda) }} - \sum\limits_{l = 0}^{k - 2} {\alpha _l \frac{{ \sigma _l (\lambda)}}{{\sigma _{k - 1}(\lambda)}}} $ with $\alpha_l $ positive.
\end{lemma}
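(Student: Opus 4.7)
The plan is to exploit $\lambda_1<0$ to show that the derivative in that direction carries a definite fraction of $\sum_i \partial G/\partial\lambda_i$. I would begin by using the basic identity $\sigma_m=\lambda_1\sigma_{m-1}(\lambda|1)+\sigma_m(\lambda|1)$ to simplify each $\partial(\sigma_m/\sigma_{k-1})/\partial\lambda_1$, obtaining the clean $\lambda_1$-independent expressions
$$
\frac{\partial}{\partial\lambda_1}\!\left(\frac{\sigma_k}{\sigma_{k-1}}\right)=\frac{\sigma_{k-1}(\lambda|1)^2-\sigma_k(\lambda|1)\sigma_{k-2}(\lambda|1)}{\sigma_{k-1}^2},
$$
$$
\frac{\partial}{\partial\lambda_1}\!\left(\frac{\sigma_l}{\sigma_{k-1}}\right)=\frac{\sigma_{l-1}(\lambda|1)\sigma_{k-1}(\lambda|1)-\sigma_l(\lambda|1)\sigma_{k-2}(\lambda|1)}{\sigma_{k-1}^2},\quad 0\le l\le k-2.
$$

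Next I would argue that the lower-order corrections always contribute in the helpful direction. Because $\lambda|1\in\Gamma_{k-1}$, the generalized Newton--MacLaurin inequality in Proposition \ref{prop2.4}, applied with $(m,l)=(k-1,l)$ and $(r,s)=(k-2,l-1)$, gives
$\sigma_{l-1}(\lambda|1)\sigma_{k-1}(\lambda|1)\le\sigma_l(\lambda|1)\sigma_{k-2}(\lambda|1)$ for every $0\le l\le k-2$ (the relevant binomial ratio is $l(n-k+1)/[(n-l)(k-1)]$, which is $\le 1$ exactly when $l\le k-1$; the boundary case $l=0$ is handled by $\sigma_{-1}\equiv 0$). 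Consequently each $-\alpha_l\,\partial(\sigma_l/\sigma_{k-1})/\partial\lambda_1$ is non-negative and can be discarded to yield
$$
\frac{\partial G}{\partial\lambda_1}\ge\frac{\sigma_{k-1}(\lambda|1)^2-\sigma_k(\lambda|1)\sigma_{k-2}(\lambda|1)}{\sigma_{k-1}^2}.
$$

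Finally I would estimate the remaining main term. Newton's inequality on $\lambda|1$ (which has $n-1$ entries) gives $\sigma_k(\lambda|1)\sigma_{k-2}(\lambda|1)\le\frac{(k-1)(n-k)}{k(n-k+1)}\sigma_{k-1}(\lambda|1)^2$, hence
$$
\sigma_{k-1}(\lambda|1)^2-\sigma_k(\lambda|1)\sigma_{k-2}(\lambda|1)\ge\frac{n}{k(n-k+1)}\sigma_{k-1}(\lambda|1)^2.
$$
Moreover, since $\lambda|1\in\Gamma_{k-1}$ forces $\sigma_{k-2}(\lambda|1)>0$, the expansion $\sigma_{k-1}=\lambda_1\sigma_{k-2}(\lambda|1)+\sigma_{k-1}(\lambda|1)$ with $\lambda_1<0$ gives $\sigma_{k-1}(\lambda|1)\ge\sigma_{k-1}>0$. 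Combining these yields $\partial G/\partial\lambda_1\ge n/[k(n-k+1)]$, while Lemma \ref{lem2.7} supplies the upper bound $\sum_i\partial G/\partial\lambda_i<n-k+1\le n-k+2$. Dividing gives the claim, with some room to spare. The main technical obstacle is the non-negativity step for the $\alpha_l$-corrections; everything else reduces to one application of Newton's inequality and a sign comparison.
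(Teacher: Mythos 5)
Your first four moves are sound and correctly verified: the identities $\partial_{\lambda_1}(\sigma_m/\sigma_{k-1}) = [\sigma_{m-1}(\lambda|1)\sigma_{k-1}(\lambda|1) - \sigma_m(\lambda|1)\sigma_{k-2}(\lambda|1)]/\sigma_{k-1}^2$, the observation that $\lambda|1\in\Gamma_{k-1}$ forces $\sigma_{l-1}(\lambda|1)\sigma_{k-1}(\lambda|1)\le \sigma_l(\lambda|1)\sigma_{k-2}(\lambda|1)$ (your binomial-ratio computation is correct, and the $l=0$ case is indeed trivial), the Newton estimate $\sigma_{k-1}(\lambda|1)^2-\sigma_k(\lambda|1)\sigma_{k-2}(\lambda|1)\ge\tfrac{n}{k(n-k+1)}\sigma_{k-1}(\lambda|1)^2$, and the sign comparison $\sigma_{k-1}(\lambda|1)\ge\sigma_{k-1}$ from $\lambda_1<0$. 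Up to there you have the clean dimensionless bound $\partial G/\partial\lambda_1\ge n/[k(n-k+1)]$.

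The final step, however, has a genuine gap. You invoke Lemma~\ref{lem2.7} to bound $\sum_i\partial G/\partial\lambda_i<n-k+1$, but Lemma~\ref{lem2.7} is a statement about $k$-admissible \emph{solutions} of equation~\eqref{1.1}; its proof uses $G(\lambda)=\alpha_{k-1}>0$ to kill the term $-\tfrac{(n-k+2)\sigma_{k-2}}{\sigma_{k-1}}\bigl(\tfrac{\sigma_k}{\sigma_{k-1}}-\sum_l\alpha_l\tfrac{\sigma_l}{\sigma_{k-1}}\bigr)$. Lemma~\ref{lem2.8} carries no such hypothesis: it is a pointwise statement about arbitrary $\lambda\in\Gamma_k$ with $\lambda_1<0$ and arbitrary positive constants $\alpha_l$. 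Without the equation, $\sum_i\partial G/\partial\lambda_i$ is \emph{not} bounded above by a dimensionless constant. Indeed each $-\alpha_l\sum_i\partial_{\lambda_i}(\sigma_l/\sigma_{k-1})$ is nonnegative and grows linearly in $\alpha_l$, so $\sum_i\partial G/\partial\lambda_i\to\infty$ as $\alpha_l\to\infty$ with $\lambda$ fixed. Meanwhile your lower bound for $\partial G/\partial\lambda_1$ was obtained precisely by \emph{discarding} the $\alpha_l$-terms, so it stays fixed as $\alpha_l\to\infty$, and the quotient you produce tends to $0$. Thus the inequality you derive fails for large $\alpha_l$, while the lemma must hold.

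The repair is essentially the paper's route through the lemma of \cite{CZ16}: do not throw away the $\alpha_l$-contributions to $\partial G/\partial\lambda_1$. Since $\partial G/\partial\lambda_i$ is a positive combination (with coefficients independent of $i$, namely $1$ and $\alpha_l(\sigma_l/\sigma_{k-1})^2$) of $\partial_{\lambda_i}(\sigma_k/\sigma_{k-1})$ and $\partial_{\lambda_i}(\sigma_{k-1}/\sigma_l)$, it suffices to prove the inequality
$\partial_{\lambda_1} Q \ge c\,\sum_i\partial_{\lambda_i}Q$ separately for each Hessian quotient $Q\in\{\sigma_k/\sigma_{k-1},\ \sigma_{k-1}/\sigma_l\}$ with a common constant $c$; the positive combination then inherits it automatically. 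Your Newton-type estimate gives exactly this for $Q=\sigma_k/\sigma_{k-1}$ (with room to spare), and an analogous argument is needed for each $\sigma_{k-1}/\sigma_l$. That term-by-term comparison is what makes the bound scale-invariant in $\alpha_l$, and it is the missing idea.
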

\begin{proof}
The lemma holds from the Lemma 2.5 in \cite{CZ16} and the following fact
\begin{equation}
\frac{{\partial G}}{{\partial \lambda _i }} = \frac{{\partial \left( {\frac{{\sigma _k }}{{\sigma _{k - 1} }}} \right)}}{{\partial \lambda _i }}{\rm{ + }}\sum\limits_{l = 0}^{k - 2} {\alpha _l \frac{1}{{\left( {\frac{{\sigma _{k - 1} }}{{\sigma _l }}} \right)^2 }}\frac{{\partial \left( {\frac{{\sigma _{k - 1} }}{{\sigma _l }}} \right)}}{{\partial \lambda _i }}}.
\end{equation}

\end{proof}

\begin{lemma}\label{lem2.9}
Suppose $\lambda=(\lambda_1,\lambda_2,\cdots,\lambda_n)\in \Gamma_k$, $k\geq 2$, and $\lambda_2\geq \cdots \geq \lambda_n$. If $\lambda_1>0$, $\lambda_n<0$, $\lambda_1\geq \delta \lambda_2$, and $-\lambda_n\geq \epsilon\lambda_1$ for small positive constants $\delta$ and $\epsilon$, then we have
\begin{equation}\label{2.18}
\frac{\partial G}{\partial \lambda_1}\geq c_2 \sum_{i=1}^n \frac{\partial G}{\partial \lambda_1},
\end{equation}
where $c_2= \frac{n}{k}\frac{c_1^2}{(n-k+2)^2}$ with $c_1=\min\{\frac{\epsilon^2\delta^2}{2(n-2)(n-1)},\frac{\epsilon^2\delta}{4(n-1)}\}$.
\end{lemma}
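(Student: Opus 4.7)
The plan is to mirror the strategy used to prove Lemma~\ref{lem2.8}: write $\partial G/\partial \lambda_i$ as a nonnegative linear combination of $\partial/\partial \lambda_i$ applied to the concave building blocks $\sigma_k/\sigma_{k-1}$ and $\sigma_{k-1}/\sigma_l$ ($0\leq l\leq k-2$), and then verify the required comparison on each block separately. Adding back over the blocks with nonnegative weights will preserve the inequality with the same constant.

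Concretely, I would start from the identity recorded in the proof of Lemma~\ref{lem2.8},
\begin{equation*}
\frac{\partial G}{\partial \lambda_i}
= \frac{\partial \left(\frac{\sigma_k}{\sigma_{k-1}}\right)}{\partial \lambda_i}
+ \sum_{l=0}^{k-2}\alpha_l\,\frac{\sigma_l^{2}}{\sigma_{k-1}^{2}}\,\frac{\partial\left(\frac{\sigma_{k-1}}{\sigma_l}\right)}{\partial \lambda_i}.
\end{equation*}
Since $\alpha_l>0$ and $\sigma_l,\sigma_{k-1}>0$ on $\Gamma_k$, each coefficient in the sum is nonnegative, and by ellipticity each partial derivative is nonnegative as well. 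Thus it suffices to establish, under the hypotheses of the lemma, the single–quotient inequalities
\begin{equation*}
\frac{\partial(\sigma_k/\sigma_{k-1})}{\partial \lambda_1}\geq c_2\sum_{i=1}^{n}\frac{\partial(\sigma_k/\sigma_{k-1})}{\partial \lambda_i},\qquad
\frac{\partial(\sigma_{k-1}/\sigma_l)}{\partial \lambda_1}\geq c_2\sum_{i=1}^{n}\frac{\partial(\sigma_{k-1}/\sigma_l)}{\partial \lambda_i},
\end{equation*}
for $0\leq l\leq k-2$, with the stated constant $c_2=\frac{n}{k}\frac{c_1^{2}}{(n-k+2)^{2}}$. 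This is precisely the content of Lemma~2.6 of \cite{CZ16} applied to the two families of concave Hessian quotients; note that in the second family the natural ellipticity cone is the larger $\Gamma_{k-1}$, so every hypothesis ($\lambda\in\Gamma_{k-1}$, $\lambda_1>0$, $\lambda_n<0$, $\lambda_1\geq\delta\lambda_2$, $-\lambda_n\geq\epsilon\lambda_1$) is automatic. Once these quotient–level bounds are in hand, summing with the weights $1$ and $\alpha_l\sigma_l^{2}/\sigma_{k-1}^{2}$ produces the claimed inequality for $G$.

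The main obstacle is the quotient–level inequality itself, i.e.\ the input from \cite{CZ16}. Its proof hinges on a careful case analysis exploiting that $\lambda_1$ is comparable to the largest eigenvalue $\lambda_2$ ($\lambda_1\geq\delta\lambda_2$) while the most negative eigenvalue $\lambda_n$ is a definite fraction of $\lambda_1$ ($-\lambda_n\geq\epsilon\lambda_1$). These two quantitative conditions are exactly what force $\sigma_{k-1}(\lambda|1)$ to be comparable (up to the factor $c_1$) with the largest $\sigma_{k-1}(\lambda|i)$, thereby yielding the desired lower bound on $\partial/\partial\lambda_1$ versus the full trace $\sum_i\partial/\partial\lambda_i$ of the elliptic operator. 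Verifying that the constant extracted in \cite{CZ16} depends only on $n,k,\delta,\epsilon$, and not on $\alpha_l$ or the specific eigenvalues, is what allows the summation step above to go through with a uniform $c_2$.
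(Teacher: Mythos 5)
Your proposal is correct and follows essentially the same route as the paper's own proof: the paper writes the identical decomposition
\[
\frac{\partial G}{\partial \lambda_i} = \frac{\partial (\sigma_k/\sigma_{k-1})}{\partial \lambda_i} + \sum_{l=0}^{k-2}\alpha_l\,\frac{1}{(\sigma_{k-1}/\sigma_l)^2}\,\frac{\partial(\sigma_{k-1}/\sigma_l)}{\partial \lambda_i},
\]
notes that each term is a nonnegative multiple of a derivative of a Hessian-quotient operator, and then cites the quotient-level version of the inequality from \cite{CZ16} (Lemma~2.7 there, not Lemma~2.6 as you guessed) to conclude by summation. Your additional remarks — that the weights $\alpha_l\sigma_l^2/\sigma_{k-1}^2$ are nonnegative, that each directional derivative is nonnegative by ellipticity so the inequality survives the weighted sum, and that $\Gamma_k\subset\Gamma_{k-1}$ makes the hypotheses of the cited lemma automatic for the second family of quotients — are exactly the unstated justifications the paper leaves to the reader, so your write-up is if anything more complete than the original.
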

\begin{proof}
The lemma holds from the Lemma 2.7 in \cite{CZ16} and the following fact
\begin{equation}
\frac{{\partial G}}{{\partial \lambda _i }} = \frac{{\partial \left( {\frac{{\sigma _k }}{{\sigma _{k - 1} }}} \right)}}{{\partial \lambda _i }}{\rm{ + }}\sum\limits_{l = 0}^{k - 2} {\alpha _l \frac{1}{{\left( {\frac{{\sigma _{k - 1} }}{{\sigma _l }}} \right)^2 }}\frac{{\partial \left( {\frac{{\sigma _{k - 1} }}{{\sigma _l }}} \right)}}{{\partial \lambda _i }}}.
\end{equation}

\end{proof}

\section{a priori estimates of the approximation equation \eqref{1.3}}

In this section, we prove the $C^2$ a priori estimates of $k$-admissible solutions of the approximation equation \eqref{1.3}, including the $C^0$ estimate, global gradient estimate and global second order derivatives estimate.

\subsection{$C^0$ estimate}

The $C^0$ estimate is easy. For completeness, we produce a proof here following the idea of Lions-Trudinger-Urbas \cite{LTU86}.

\begin{theorem} \label{th3.1}
Suppose $\Omega \subset \mathbb{R}^n$ is a $C^1$ bounded domain, $\alpha_l(x) \in C^0(\overline{\Omega})$ with $l=0, 1, \cdots, k-1$ are positive functions and $\varphi \in C^0(\partial \Omega)$, and $u \in C^2(\Omega)\cap C^1(\overline \Omega)$ is the $k$-admissible solution of the equation \eqref{1.3} with $\varepsilon \in (0,1)$, then we have
\begin{align}\label{3.1}
\sup_\Omega |\varepsilon u|  \leq M_0,
\end{align}
where $M_0$ depends on $n$, $k$, $ \text{diam} (\Omega)$, $\max\limits_{\partial \Omega} |\varphi|$ and $\sum\limits_{l=0}^{k-1} \sup\limits_\Omega \alpha_l$.
\end{theorem}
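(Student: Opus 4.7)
The plan, following the method of Lions--Trudinger--Urbas \cite{LTU86}, is to bound $\varepsilon \max_\Omega u$ from above and $\varepsilon \min_\Omega u$ from below, each uniformly in $\varepsilon \in (0,1)$. The upper bound will come from a direct maximum-principle argument, while the lower bound will require a case split according to whether the minimum is attained on $\partial\Omega$ or in the interior.

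For the upper bound, I first observe that $k$-admissibility with $k\geq 2$ forces $\Delta u = \sigma_1(D^2u) > 0$ in $\Omega$, so $u$ is strictly subharmonic. By the strong maximum principle it attains its supremum at some $x_M\in\partial\Omega$, and at such a point $u_\nu(x_M)\geq 0$. The Neumann condition $u_\nu=-\varepsilon u+\varphi$ then yields
\[
\varepsilon u(x_M) \;=\; \varphi(x_M)-u_\nu(x_M)\;\leq\;\max_{\partial\Omega}|\varphi|,
\]
so $\varepsilon\sup_{\Omega} u\leq \max_{\partial\Omega}|\varphi|$.

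For the lower bound, let $x_m\in\overline{\Omega}$ be a minimizer of $u$. If $x_m\in\partial\Omega$ then $u_\nu(x_m)\leq 0$, and the same Neumann identity with the inequalities reversed gives $\varepsilon u(x_m)\geq\varphi(x_m)\geq-\max_{\partial\Omega}|\varphi|$, finishing that case. The genuinely nontrivial situation is $x_m\in\Omega$: here $\nabla u(x_m)=0$ and $D^2u(x_m)\geq 0$, yet no strong maximum principle is available because $-u$ need not be subharmonic. To treat this case I would construct a quadratic barrier $\underline{u}(x)=\frac{B}{2}|x|^2-A$; a direct computation yields $\sigma_l(D^2\underline{u})=B^l C_n^l$, and choosing $B$ large (depending only on $n$, $k$ and $\sum_{l=0}^{k-1}\sup\alpha_l$) makes $\underline{u}$ a subsolution of the PDE. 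Comparing $\underline{u}$ with $u$ using the concavity and ellipticity of $G$ on $\Gamma_k$ furnished by Lemma~\ref{lem2.5}, and then feeding in the Robin-type boundary condition, should deliver the required bound $\varepsilon u(x_m)\geq -M_0$ of the stated form.

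The main obstacle is precisely this interior-minimum case. $k$-admissibility is one-sided, so the upper-bound argument cannot simply be symmetrized, and the Robin term $-\varepsilon u$ in the boundary condition has to be handled carefully so that the final estimate controls $\varepsilon u$ rather than $u$ itself. The quadratic barrier plus Neumann comparison is the simplest vehicle for that, mirroring the scheme of \cite{LTU86} in the Monge--Amp\`ere setting.
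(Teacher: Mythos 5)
Your proposal follows essentially the same route as the paper: strict subharmonicity of a $k$-admissible function plus the Neumann condition gives the upper bound, and a quadratic subsolution combined with the comparison principle and the Robin boundary term gives the lower bound. One small simplification worth noting: the case split on whether $\min_{\overline\Omega} u$ is attained interiorly or on $\partial\Omega$ is unnecessary. The paper applies the comparison principle (with the barrier $A|x-x_1|^2$, $x_1\in\Omega$ fixed) directly to conclude that $u - A|x-x_1|^2$ attains its minimum at some $x_2\in\partial\Omega$; the Neumann condition at $x_2$ gives $\varepsilon u(x_2)\geq-\max_{\partial\Omega}|\varphi|-2A\,\mathrm{diam}(\Omega)$, and since
\[
\varepsilon\min_{\overline\Omega}u \;\geq\; \varepsilon\min_{\overline\Omega}\bigl(u-A|x-x_1|^2\bigr)\;\geq\;\varepsilon u(x_2)-A\,\mathrm{diam}(\Omega)^2
\]
(using $\varepsilon<1$), the bound follows uniformly, without ever asking where the unshifted minimum of $u$ sits. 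Your interior-case sketch is in the right direction and would reach the same conclusion once the chain of inequalities is written out, but the cleaner formulation dispenses with the dichotomy; also, the additive constant $-A$ in your barrier $\tfrac{B}{2}|x|^2-A$ plays no role, since it affects neither $D^2\underline{u}$ nor $\underline{u}_\nu$.
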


\begin{proof}
Firstly, since $u$ is subharmonic, the maximum of $u$ is attained at some boundary point $x_0 \in \partial \Omega$. Then we can get
\begin{align}\label{3.2}
0 \leq u_\nu (x_0) =-\varepsilon u(x_0) + \varphi (x_0).
\end{align}
Hence
\begin{align}\label{3.3}
\max_{\overline{\Omega}}( \varepsilon u )= \varepsilon u(x_0) \leq \varphi (x_0) \leq \max_{\partial \Omega} |\varphi|.
\end{align}

For a fixed point $x_1 \in \Omega$, and a positive constant $A$ large enough, we have
\begin{align} \label{3.4}
G (D^2 (A|x-x_1|^2), x) =& 2A \frac{C_n^k}{C_n^{k-1}} - \sum\limits_{l=0}^{k-2} \alpha_l (2A)^{-(k-1-l)} \frac{C_n^l}{C_n^{k-1}} \notag \\
\geq& \sup_\Omega \alpha_{k-1} \geq \alpha_{k-1}(x) = G (D^2 u, x).
\end{align}
By the comparison principle, we know $u - A|x-x_1|^2$ attains its minimum at some boundary point $x_2 \in \partial \Omega$. Then
\begin{align}\label{3.5}
0 \geq& (u - A|x-x_1|^2)_\nu (x_2) = u_\nu (x_2) -2A (x_2-x_1)\cdot \nu  \notag \\
=&-\varepsilon u(x_2) + \varphi (x_2)-2A (x_2-x_1)\cdot \nu  \notag \\
\geq& -\varepsilon u(x_2) - \max_{\partial \Omega} |\varphi| - 2A \text{diam}(\Omega).
\end{align}
Hence
\begin{align}\label{3.6}
\min_{\overline{\Omega}} (\varepsilon u) \geq \varepsilon \min_{\overline{\Omega}} (u - A|x-x_1|^2) \geq & \varepsilon u (x_2) - A|x_2-x_1|^2 \notag \\
\geq& - \max_{\partial \Omega} |\varphi| - 2A \text{diam}(\Omega)- A \text{diam}(\Omega)^2.
\end{align}

\end{proof}

\subsection{Global Gradient estimate}

In this subsection, we prove the global gradient estimate (independent of $\varepsilon$), using a similar argument of complex Monge-Amp\`ere equation in Li \cite{L94}.

\begin{theorem} \label{th3.2}
Suppose $\Omega \subset \mathbb{R}^n$ is a $C^3$ strictly convex domain, $\alpha_l(x) \in C^1(\overline{\Omega})$ with $l=0, 1, \cdots, k-1$ are positive functions and $\varphi \in C^2(\partial \Omega)$, and $u \in C^3(\Omega)\cap C^2(\overline \Omega)$ is the $k$-admissible solution of the equation \eqref{1.3} with $\varepsilon >0$ sufficiently small, then we have
\begin{align}\label{3.7}
\sup_\Omega |D u|  \leq M_1,
\end{align}
and
\begin{align}\label{3.8}
\sup_{\Omega} | u - \frac{1}{|\Omega|} \int_\Omega u|  \leq \overline{M_0},
\end{align}
where $M_1$ and $\overline{M_0}$ depend on $n$, $k$, $\Omega$, $|\varphi|_{C^2}$, $\inf \alpha_l$ and $|\alpha_l|_{C^1}$.
\end{theorem}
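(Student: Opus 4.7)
The plan is to establish \eqref{3.7} directly; the oscillation estimate \eqref{3.8} then drops out by a mean-value argument. Let $v := u - \frac{1}{|\Omega|}\int_\Omega u$, so $\int_\Omega v = 0$ and $Dv = Du$, $D^2v = D^2u$. Thus $v$ satisfies the same PDE as $u$, and on the boundary $v_\nu = -\varepsilon v + \tilde\varphi$ with $\tilde\varphi := \varphi - \varepsilon\bar u$; the $C^0$ bound of Theorem \ref{th3.1} gives $|\tilde\varphi|_{C^0(\partial\Omega)} \le |\varphi|_{C^0} + M_0$, independently of $\varepsilon$.

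For the gradient bound I would adapt the maximum-principle technique of \cite{L94} and consider an auxiliary function of the form
\begin{equation*}
 \Psi(x) = \log |Du(x)|^2 + \phi(v(x)),
\end{equation*}
where $\phi$ is a bounded $C^2$ function to be chosen (for instance $\phi(t) = \alpha t$ on the range of $v$, or $\phi(t) = -\log(A - t)$ with $A$ strictly above $\sup v$; the point is that only $\phi'$ and $\phi''$, not $\phi$ itself, enter the final inequality, so no a priori control of $\sup|v|$ is needed). Let $\Psi$ attain its maximum over $\overline\Omega$ at $x^*$.

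If $x^* \in \Omega$, then $D\Psi(x^*) = 0$ and $G^{ij}\Psi_{ij}(x^*) \le 0$ by the ellipticity of $G$ (Lemma \ref{lem2.5}). Differentiating the equation $G(D^2u,x) = \alpha_{k-1}(x)$ once yields
\begin{equation*}
 G^{ij} u_{ijm} = \alpha_{k-1,m} + \sum_{l=0}^{k-2} \alpha_{l,m}\,\frac{\sigma_l(D^2u)}{\sigma_{k-1}(D^2u)},
\end{equation*}
whose right-hand side is bounded by Lemma \ref{lem2.6}. Substituting this into $G^{ij}\Psi_{ij} \le 0$, using the critical-point identity to eliminate the cross term $u_k u_{ki}$, and invoking the concavity of $G$ together with the lower bound on $\sum G^{ii}$ and the pointwise control of $\sum G^{ij}u_{ij}$ from Lemma \ref{lem2.7}, I expect to derive an inequality of the schematic form $c|Du(x^*)|^2 \le C(1 + |Du(x^*)|)$, which bounds $|Du(x^*)|$. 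If $x^* \in \partial\Omega$, I would apply the Hopf boundary lemma: $\Psi_\nu(x^*) \ge 0$. The term $2 u_k u_{k\nu}/|Du|^2$ in $\Psi_\nu$ is unraveled via $u_{k\nu} = (u_\nu)_k - u_i\,\nu_{i,k}$, differentiating the Neumann condition tangentially so that $\varphi_{C^2}$ enters, and then the extension of $\nu$ off the boundary brings in the second fundamental form of $\partial\Omega$. Strict convexity of $\Omega$ supplies a uniform lower bound $\mathrm{II} \ge \kappa_0 I > 0$, producing a term $-\kappa_0 |D_\tau u|^2/|Du|^2$ in $\Psi_\nu$; combined with $|u_\nu| \le M_0 + |\varphi|$ from the boundary condition, this makes $\Psi_\nu < 0$ whenever $|Du(x^*)|$ is too large, contradicting $\Psi_\nu(x^*) \ge 0$.

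Once $M_1 := \sup_\Omega|Du|$ is in hand, \eqref{3.8} is immediate: since $\int_\Omega v\,dx = 0$, there exists $x^\# \in \overline\Omega$ with $v(x^\#) = 0$, whence $|v(x)| = |v(x) - v(x^\#)| \le M_1 \operatorname{diam}(\Omega)$ for every $x \in \Omega$. The main obstacle I expect is the boundary case: one must carefully arrange that the convexity of $\partial\Omega$ enters with the right sign after unpacking $u_{k\nu}$, and simultaneously that the awkward term $-\varepsilon u$ in the boundary condition only contributes quantities controllable by $M_0 = \sup|\varepsilon u|$ (and not by $\varepsilon^{-1}$). This is precisely the place where strict convexity of $\Omega$ and the specific form $u_\nu = -\varepsilon u + \varphi$ (rather than an unperturbed Neumann condition) are both essential.
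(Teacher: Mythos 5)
Your proposal takes a genuinely different route from the paper, and there is a gap in it. The paper (following Li \cite{L94}, as it states) uses the linear directional-derivative test function
$W(x,\xi)=D_\xi u - \langle\nu,\xi\rangle(-\varepsilon u+\varphi)+\varepsilon^2 u^2+K|x|^2$
over $\overline\Omega\times\mathbb{S}^{n-1}$, chooses $K$ large so that $G^{ii}W_{ii}>0$ in the interior (no genuine interior gradient estimate is needed; the $K|x|^2$ term supplies $2K\sum G^{ii}\ge \tfrac{2K(n-k+1)}{k}$ and simply dominates), and then handles a boundary maximum by splitting on whether $\xi_0$ is normal, non-tangential, or tangential. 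The design of $W$ is precisely so that the normal-direction value $W(x_0,\nu)$ is bounded by the boundary condition, reducing to the tangential case; there the Hopf-lemma computation only ever produces $D_\nu D_{\xi_0}u = D_{\xi_0}D_\nu u - D_{\xi_0}\nu_k D_k u$ with $\xi_0$ \emph{tangential}, so the tangential derivative of the Neumann condition $u_\nu=-\varepsilon u+\varphi$ can be substituted and no uncontrolled second derivative appears.

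Your $\Psi=\log|Du|^2+\phi(v)$ has two problems. At a boundary maximum, $\Psi_\nu=\frac{2\sum_k u_k u_{k\nu}}{|Du|^2}+\phi'(v)u_\nu$, and the sum over $k$ includes $k=\nu$, giving a term $\frac{2u_\nu u_{\nu\nu}}{|Du|^2}$. You only control $u_\nu$ (from the boundary condition and the $C^0$ bound), not $u_{\nu\nu}$, so this term is uncontrolled in sign and size; ``differentiating the Neumann condition tangentially'' addresses the tangential components $u_{\tau\nu}$ but not $u_{\nu\nu}$. A fix exists (e.g.\ replace $|Du|^2$ by the tangential quantity $|Du|^2-u_\nu^2$ and extend $\nu$ so $D_\nu\nu=0$, which makes $u_{\nu\nu}$ cancel), but it is not in your proposal. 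In the interior the argument is also not clearly sound: after using the critical-point identity and the once-differentiated equation, the sign-favorable pieces are $(\phi''-\phi'^2)G^{ij}u_iu_j$, $\phi'G^{ij}u_{ij}$ (bounded by Lemma~\ref{lem2.7}), and $\frac{2}{|Du|^2}G^{ij}u_{ki}u_{kj}\ge 0$; none of these supplies the coercive term $c|Du|^2$ appearing in your ``schematic inequality,'' since $G^{ij}u_iu_j$ has no lower bound of the form $c|Du|^2$ when $D^2u$ has a large eigenvalue. So as written the interior contradiction does not close either, whereas in the paper the interior maximum is ruled out trivially by the $K|x|^2$ term.
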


\begin{proof}
We just need to prove \eqref{3.7}, and then \eqref{3.8} holds directly from \eqref{3.7}.

In order to prove \eqref{3.7}, it suffices to prove
\begin{align}\label{3.9}
D_{\xi}u(x) \leq  M_1, \quad \forall (x, \xi) \in \overline{\Omega }\times \mathbb{S}^{n-1}.
\end{align}

For any $(x, \xi) \in \overline{\Omega }\times \mathbb{S}^{n-1}$, denote
\begin{align}\label{4.3}
W(x, \xi)=D_{\xi}u(x)-\langle \nu, \xi \rangle (- \varepsilon u + \varphi(x))+ \varepsilon^2 u^2+K|x|^2,
\end{align}
where $K$ is a large constant to be determined later, and $\nu$ is a $C^2(\overline{\Omega})$ extension of the outer unit normal vector field on $\partial \Omega$.

 Assume $W$ achieves its maximum at $(x_0, \xi_0) \in \overline{\Omega }\times \mathbb{S}^{n-1}$. It is easy to see $D_{\xi_0} u (x_0) >0$. We claim $x_0 \in \partial \Omega$. Otherwise, if $x_0 \in \Omega$, we will get a contradiction in the following.

 Firstly, we rotate the coordinates such that $D^2 u (x_0)$ is diagonal. It is easy to see $\{G^{ij} \}$ is diagonal. For fixed $\xi = \xi_0$, $W(x, \xi_0)$ achieves its maximum at the same point $x_0 \in \Omega$ and we can easily get at $x_0$
\begin{align}\label{4.4}
0\ge G^{ii} \partial_{ii} W=&G^{ii}\big[u_{ii\xi_0} -\langle\nu, \xi_0\rangle_{ii} (- \varepsilon u + \varphi)- \langle \nu, \xi_0\rangle (- \varepsilon u_{ii} + \varphi_{ii}) \notag \\
&\quad \quad- 2\langle \nu, \xi_0 \rangle_{i} (- \varepsilon u_{i} + \varphi_{i}) +2 \varepsilon^2 u_i^2+2\varepsilon^2 u u_{ii}+ K \big] \notag \\
=& D_{\xi_0} \alpha_{k-1} + \sum\limits_{l=0}^{k-2} D_{\xi_0} \alpha_l \frac{\sigma_l}{\sigma_{k-1}} + G^{ii}\big[2 \varepsilon^2 u_i^2 + 2 \langle\nu, \xi_0\rangle_{i} \varepsilon u_{i}\big] \notag \\
&+  G^{ii} u_{ii} \big[ \varepsilon \langle \nu, \xi_0 \rangle +2\varepsilon^2 u \big]  \notag \\
&+ G^{ii}\big[K  -\langle\nu, \xi_0\rangle_{ii} (- \varepsilon u + \varphi) - \langle\nu, \xi_0\rangle \varphi_{ii} - 2\langle\nu, \xi_0\rangle_{i} \varphi_{i} \big]  \notag \\
\geq& -|D \alpha_{k-1}|  - \sum\limits_{l=0}^{k-2} |D \alpha_l| C(n,k,\inf \alpha_l) - (n-k+1) |D \langle \nu, \xi_0\rangle|^2 \notag \\
&-  C(n, k, \sum \sup \alpha_l) \big[1 +2M_0 \big]  \notag \\
&+ \frac{n-k+1}{k} \big[K  - |D^2 \langle \nu, \xi_0 \rangle| (M_0 + |\varphi|) - |D^2 \varphi| - 2 |D \langle \nu, \xi_0 \rangle| |D\varphi| \big] \notag \\
>& 0,
\end{align}
where $K$ is large enough, depending only on $n$, $k$, $\Omega$, $M_0$, $|\varphi|_{C^2}$ and $\alpha_l$. This is a contradiction.

So $x_0 \in \partial \Omega$. Then we continue our proof in the following three cases.

(a) If $\xi_0$ is normal at $x_0\in \partial \Omega,$ then
\begin{align}
W(x_0,\xi_0)= \varepsilon ^2 u^2+ K |x_0|^2 \le C. \notag
\end{align}
Then we can easily obtain \eqref{3.9}.

(b) If $\xi_0$ is non-tangential at $x_0\in \partial \Omega$, then we can write $\xi_0=\alpha \tau + \beta \nu$,
where $\tau \in \mathbb{S}^{n-1}$ is tangential at $x_0$, that is $\langle\tau, \nu\rangle=0$, $\alpha =\langle \xi_0, \tau \rangle >0$, $\beta= \langle \xi_0, \nu \rangle <1$, and $\alpha^2+\beta^2=1$. Then we have
\begin{align}\label{3.12}
W(x_0, \xi_0)=&\alpha D_{\tau}u + \varepsilon ^2 u^2 + K |x_0|^2 \notag \\
\le &\alpha W(x_0, \xi_0)+(1-\alpha)(\varepsilon ^2 u^2 + K |x_0|^2),
\end{align}
so
\begin{align}
W(x_0, \xi_0)\le  \varepsilon ^2 u^2 + K |x_0|^2  \leq C. \notag
\end{align}
Then we can easily get \eqref{3.9}.

(c) If $\xi_0$ is tangential at $x_0\in \partial \Omega$, we may assume that the outer normal direction of $\Omega$ at $x_0$ is $(0,\cdots,0,1)$. By a rotation, we assume that $\xi_0=(1, \cdots, 0)=e_1$. Then we have
\begin{align}
0 \leq D_{\nu}W(x_0, \xi_0)=&D_{\nu}D_1u-D_{\nu}\langle\nu, \xi_0\rangle(- \varepsilon u + \varphi)+ 2 u \cdot D_{\nu}u+K D_{\nu} |x_0|^2  \notag \\
\leq& D_{\nu}D_{1}u + C_1 \notag\\
=& D_{1}D_{\nu}u - D_1\nu_kD_ku + C_1.
\end{align}
By the boundary condition, we know
\begin{align}
 D_{1}D_{\nu}u =  D_{1}(- \varepsilon u + \varphi) \leq  D_1 \varphi.
\end{align}
Following the argument of \cite{L94}, we can get
\begin{align}
 - D_1\nu_k D_k u  \leq  - \kappa_{min} W(x_0, \xi_0) + C_2,
\end{align}
where $\kappa_{min}$ is the minimum principal curvature of $\partial \Omega$. So
\begin{align}
 W(x_0, \xi_0) \leq \frac{C_1 + |D \varphi|+ C_2}{\kappa_{min}}.
\end{align}
Then we can conclude \eqref{3.9}.

\end{proof}

\subsection{Global Second derivatives estimates}

In this subsection, we prove the global second derivatives estimate (independent of $\varepsilon$), following the ideas of Lions-Trudinger-Urbas \cite{LTU86}, Ma-Qiu \cite{MQ15} and Chen-Zhang \cite{CZ16}.

\begin{theorem} \label{th3.3}
Suppose $\Omega \subset \mathbb{R}^n$ is a $C^4$ strictly convex domain, $\alpha_l(x) \in C^2(\overline{\Omega})$ with $l=0, 1, \cdots, k-1$ are positive functions and $\varphi \in C^3(\partial \Omega)$, and $u \in C^4(\Omega)\cap C^3(\overline \Omega)$ is the $k$-admissible solution of the equation \eqref{1.3} with $\varepsilon >0$ sufficiently small, then we have
\begin{align}\label{3.17}
\sup_\Omega |D^2 u|  \leq M_2,
\end{align}
where $M_2$ depends on $n$, $k$, $\Omega$, $|\varphi|_{C^3}$, $\inf \alpha_l$ and $|\alpha_l|_{C^2}$.
\end{theorem}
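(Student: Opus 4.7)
The strategy is to bound the largest eigenvalue \(\lambda_1 = \lambda_{\max}(D^2u)\), which by \(k\)-admissibility controls all of \(|D^2u|\) (since any negative eigenvalue is bounded below by \(-(n-1)\lambda_1\) via \(\sigma_1(D^2u)>0\)), uniformly in \(\varepsilon\). Equivalently, I want to control
\[
M := \max_{(x,\xi)\in \overline{\Omega}\times \mathbb{S}^{n-1}} u_{\xi\xi}(x).
\]
Following the Ma--Qiu and Chen--Zhang framework cited for \cite{MQ15,CZ16}, I would introduce a global auxiliary function in the spirit of the gradient proof, e.g.
\[
v(x,\xi) = \log u_{\xi\xi}(x) - A\langle \nu(x),\xi\rangle\bigl(u_\nu+\varepsilon u -\varphi(x)\bigr) + B|Du|^2 + K|x|^2,
\]
with \(\nu\) a \(C^2\) extension of the outer unit normal and \(A,B,K\) large constants to be fixed in the course of the argument. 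Let \((x_0,\xi_0)\) realize the maximum of \(v\); after rotation I may assume \(D^2u(x_0)\) is diagonal with \(\lambda_1\geq\cdots\geq \lambda_n\) and \(\xi_0=e_1\), so the task reduces to bounding \(\lambda_1(x_0)\).

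\textbf{Interior maximum.} Suppose first \(x_0\in\Omega\), so \(0\geq G^{ij}v_{ij}(x_0)\). Differentiating \(G(D^2u,x)=\alpha_{k-1}(x)\) once and twice in \(e_1\) and invoking the concavity of \(G\) from Lemma \ref{lem2.5}, the term \(G^{ij}(\log u_{11})_{ij}\) contributes a nonnegative quantity modulo terms in \(D\alpha_l, D^2\alpha_{k-1}\) controlled by the gradient bound of Theorem \ref{th3.2}. The perturbations \(B|Du|^2\) and \(K|x|^2\) yield the positive contributions \(2BG^{ii}u_{ii}^2\) and \(2K\sum G^{ii}\), which by Lemma \ref{lem2.7} dominate the bad cross terms once \(\lambda_1\) is sufficiently large. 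The delicate regime arises when some \(\lambda_n\) is very negative relative to \(\lambda_1\): this is precisely the situation addressed by Lemmas \ref{lem2.8}--\ref{lem2.9}, which furnish the key inequality \(G^{11}\geq c_2\sum_i G^{ii}\) allowing one to absorb the quadratic third-derivative terms \(G^{ii}u_{11i}^2/u_{11}\) produced by differentiating \(\log u_{11}\). A standard contradiction then yields \(\lambda_1(x_0)\leq C\).

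\textbf{Boundary maximum.} If \(x_0\in\partial\Omega\) I would split into three cases as in the proof of Theorem \ref{th3.2}. When \(\xi_0\) is tangential, differentiating the Neumann condition \(u_\nu=-\varepsilon u+\varphi\) twice along \(\xi_0\) at \(x_0\) expresses \(u_{\xi_0\xi_0}\) in terms of \(u_{\xi_0\nu}\) and curvature terms; strict convexity of \(\partial\Omega\) produces a favorable signed contribution of the form \(\kappa_{\min}\,u_\nu\) which yields \(u_{\xi_0\xi_0}(x_0)\leq C(1+|Du|)\). The non-tangential case decomposes \(\xi_0=\alpha\tau+\beta\nu\) and, using \(u_{\xi_0\xi_0}\leq \alpha^2 u_{\tau\tau}+2\alpha\beta\, u_{\tau\nu}+\beta^2 u_{\nu\nu}\), reduces to the tangential estimate together with the double normal estimate. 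For the latter one constructs a barrier of the form \(\pm(u_T-\varphi_T) + \overline{A}\,d(x)+\overline{B}\,d(x)^2/2\) in a tubular neighborhood of \(\partial\Omega\) (where \(d\) denotes the signed distance, smooth by strict convexity and the \(C^4\) hypothesis), tests it against \(G^{ij}\partial_{ij}\), and uses the positive lower bound on \(G^{ij}\partial_{ij}d\) (provided by strict convexity and Lemma \ref{lem2.7}) together with the explicit expression \eqref{2.12} for \(\sum G^{ij}u_{ij}\) to choose \(\overline{A},\overline{B}\) so that the barrier is a supersolution; comparison then bounds \(u_{\nu\nu}(x_0)\) by tangential data and known quantities.

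\textbf{Main obstacle.} The principal difficulty is the double normal boundary estimate. The Neumann condition \(u_\nu=-\varepsilon u+\varphi\) gives no direct control on \(u_{\nu\nu}\), and the presence of the factor \(-\varepsilon u\) means the barrier must tolerate an \(O(\varepsilon)\) lower-order perturbation without losing sign; this forces the constants to be chosen independent of \(\varepsilon\), using only the bound \(\sup|\varepsilon u|\leq M_0\) from Theorem \ref{th3.1}. Compounding the difficulty, the mixed structure on the right-hand side contributes several terms \(-\alpha_l\sigma_l/\sigma_{k-1}\) whose linearizations must be handled collectively: the concavity of Lemma \ref{lem2.5} and the identities \eqref{2.11}--\eqref{2.12}, together with the Newton--MacLaurin inequality of Proposition \ref{prop2.4}, must be combined so that the dominant \(\sigma_k/\sigma_{k-1}\) contribution absorbs all lower-order negative terms. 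This is the technical heart of the argument, and it is where the hypotheses \(\alpha_l>0\) and the strict convexity of \(\Omega\) enter decisively.
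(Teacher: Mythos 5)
Your high-level outline (reduce to boundary, split tangential/non-tangential/normal, use a barrier for the double-normal direction) matches the paper's architecture, but there are two concrete gaps that would prevent the argument from closing as written, plus a misattribution of where the key lemmas are used.

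First, the paper does \emph{not} use $\log u_{\xi\xi}$. Its global auxiliary function is
\[
v(x,\xi)=u_{\xi\xi}-v'(x,\xi)+K|x|^2+|Du|^2,
\qquad v'(x,\xi)=2(\xi\cdot\nu)\,\xi'\cdot\bigl(D\varphi-\varepsilon Du-u_mD\nu^m\bigr),
\]
and it treats the term $-G^{ij,rs}u_{ij\xi}u_{rs\xi}$ by the specific convexity of $\bigl(\sigma_{k-1}/\sigma_l\bigr)^{1/(k-1-l)}$ together with a completed square against the $(\alpha_l)_\xi$ cross terms, giving a clean lower bound $\geq -C_3$. If you instead take $\log u_{\xi\xi}$, you create terms of the form $G^{ii}u_{11i}^2/u_{11}^2$ which you then propose to absorb via Lemma~\ref{lem2.9}. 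But that lemma requires hypotheses ($\lambda_1>0$, $\lambda_n<0$, $\lambda_1\geq\delta\lambda_2$, $-\lambda_n\geq\epsilon\lambda_1$ for quantitatively controlled $\delta,\epsilon$) that you cannot guarantee at an interior maximum of your test function. In the paper, Lemmas~\ref{lem2.8}--\ref{lem2.9} are invoked not in the interior reduction at all, but in Step~3 (the upper bound for $u_{\nu\nu}$), specifically in Case~III where the eigenvalue structure is pinned down by the test-function equations \eqref{3.40}, \eqref{3.43} and the bound $u_{22}\leq C(1+M)$ from Step~1.

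Second, and more seriously, your proposed double-normal barrier $\pm(u_T-\varphi_T)+\overline Ad+\overline Bd^2/2$ does not encode the mechanism that makes the estimate self-consistent. The paper's barriers are
\[
P(x)=(1+\beta d)\bigl[Du\cdot(-Dd)+\varepsilon u-\varphi\bigr]-\Bigl(A+\tfrac12 M\Bigr)d,
\qquad
\widetilde P(x)=(1+\beta d)\bigl[Du\cdot(-Dd)+\varepsilon u-\varphi\bigr]+\Bigl(A+\tfrac12 M\Bigr)d,
\]
in a tubular neighborhood $\Omega_\mu$, where $M=\max_{\partial\Omega}|u_{\nu\nu}|$ appears \emph{explicitly in the barrier}. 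The bracket vanishes on $\partial\Omega$ by the Neumann condition, and the Hopf-type inequality $0\leq P_\nu(x_0)$ (resp.\ $0\geq \widetilde P_\nu(x_0)$) then gives $\mp u_{\nu\nu}(x_0)\leq A+\tfrac12M+C$, which combined with $M=\max|u_{\nu\nu}|$ closes to $M\leq 2A+2C$. The linear growth constant $A+\tfrac12 M$ is essential: the maximum-principle computation inside $\Omega_\mu$ uses the first-derivative equation \eqref{3.28}/\eqref{3.40} to show that along directions with $\beta d_i^2\geq 1/n$ one has $|u_{ii}|\sim A+\tfrac12 M$, and it is precisely this size that, after invoking $-G^{ii}d_{ii}\geq c_0\sum G^{ii}$ and (in the worst case) $G^{11}\geq c_2\sum G^{ii}$, produces a contradiction for $\beta,A$ large. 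A barrier without the $M$-dependence cannot close the loop, and a generic quadratic-in-$d$ barrier lacks the $\beta$-parameter whose smallness of support ($\mu\sim1/\beta$) is needed to make $G$ nonempty and to run the case analysis. You should also note that the paper handles the lower and upper bounds on $u_{\nu\nu}$ with the two separate barriers $P$ and $\widetilde P$, and that it is only in the \emph{upper} bound (Step~3) that the three-case argument and Lemma~\ref{lem2.9} are required; Step~2 is simpler.
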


\begin{proof}
In the following, we divide the proof of Theorem \ref{th3.3} into three steps. In step one, we reduce global second derivatives to double normal second derivatives on boundary, then we prove the lower estimate of double normal second derivatives on the boundary in step two, and at last we prove the upper estimate of double normal second derivatives on the boundary.

\textbf{Step 1. Prove} $\sup_{\Omega}|D^2u|\leq C(1+\max_{\partial \Omega}|u_{\nu\nu}|)$.

Following the idea of Lions-Trudinger-Urbas \cite{LTU86}, we assume $0\in \Omega$, and consider the auxiliary function
\begin{equation}
v(x,\xi) = u_{\xi\xi} - v'(x,\xi) +K |x|^2 + |Du|^2,
\end{equation}
where $v'(x,\xi) = 2 (\xi\cdot \nu)\xi'(D \varphi - \varepsilon D u - u_m D\nu^m)=a^mu_m+b$, $\nu = (\nu^1, \cdots, \nu^{n}) \in \mathbb{S}^{n-1}$ is a $C^3(\overline{\Omega})$ extension of the outer unit normal vector field on $\partial \Omega$, $\xi'=\xi-(\xi\cdot \nu)\nu$, $a^m=2(\xi\cdot \nu)(- \varepsilon \xi'^m -\xi'^iD_i\nu^m)$,$b=2(\xi\cdot\nu)\xi'^m\varphi_{m}$, and $K>0$ is to be determined later.

For any $x \in \Omega$, we rotate the coordinates such that $D^2 u (x)$ is diagonal, and then $\{G^{ij} \}$ is diagonal. For any fixed $\xi \in \mathbb{S}^{n-1}$, we have
\begin{align}
G^{ii}v_{ii}=& G^{ii} [u_{ii\xi\xi}-D_{ii}a^mu_m- 2 D_i a^mu_{mi} -a^mu_{iim}-D_{ii}b+ 2 K +2 u_{ii}^2+ 2 u_m u_{iim}] \notag \\
=&(\alpha_{k-1})_{\xi\xi} - 2\sum_{l=0}^{k-2} (\alpha_l)_\xi {G_l}^{ii} u_{ii\xi} - \sum_{l=0}^{k-2} (\alpha_{l})_{\xi\xi} G_l-G^{ij,rs}u_{ij\xi}u_{rs\xi}  \notag \\
&+ G^{ii} [2K -D_{ii}a^mu_m -D_{ii}b ]+ G^{ii} [2 u_{ii}^2- 2 D_i a^iu_{ii}]  \notag  \\
&+(-a^m+ 2 u_m) [(\alpha_{k-1})_m - \sum_{l=0}^{k-2} (\alpha_l)_m {G_l}]  \notag \\
\geq& G^{ii} [2K -C_2]-C_1- 2\sum_{l=0}^{k-2} (\alpha_l)_\xi {G_l}^{ii} u_{ii\xi} -G^{ij,rs}u_{ij\xi}u_{rs\xi}  \notag \\
\geq& \frac{n-k+1}{k}[2K -C_2]-C_1-C_3  >0,
\end{align}
where $K$ is large enough, and we used the fact
\begin{align}
&- 2\sum_{l=0}^{k-2} (\alpha_l)_\xi {G_l}^{ii} u_{ii\xi} -G^{ij,rs}u_{ij\xi}u_{rs\xi} \notag \\
\geq& - 2\sum_{l=0}^{k-2} (\alpha_l)_\xi {G_l}^{ii} u_{ii\xi} - \sum_{l=0}^{k-2} \alpha_l {G_l}^{ij,rs}u_{ij\xi}u_{rs\xi}  \notag \\
=& - 2\sum_{l=0}^{k-2} (\alpha_l)_\xi {G_l}^{ii} u_{ii\xi} \notag  \\
&- \sum_{l=0}^{k-2} \alpha_l \Big[\frac{{k - 1 - l}}{{\left( {\frac{{\sigma _{k - 1} }}{{\sigma _l }}} \right)^{\frac{{k - l}}{{k - 1 - l}}} }}\frac{{\partial ^2 \left( {\frac{{\sigma _{k - 1} }}{{\sigma _l }}} \right)^{\frac{1}{{k - 1 - l}}} }}{{\partial u_{ij} \partial u_{rs} }} - \frac{{k - l}}{{k - 1 - l}}\frac{1}{{G_l }}G_l ^{ij} G_l ^{rs}\Big]u_{ij\xi}u_{rs\xi}  \notag \\
\geq& \sum_{l=0}^{k-2} \frac{{k - 1 - l}}{{k - l}}\frac{{(\alpha _l )_\xi  ^2 }}{{\alpha _l }}G_l \notag \\
\geq& -C_3. \notag
\end{align}
So $v(x,\xi)$ attains its maximum on $\partial \Omega$. We can assume $\max _{\bar \Omega\times \mathbb{S}^{n-1}} v(x,\xi)$ attains at $(x_0,\xi_0)\in \partial\Omega\times \mathbb{S}^{n-1}$.

Then we continue our proof in the following two cases following the idea of \cite{L94}.

Case a: $\xi_0$ is tangential to $\partial\Omega$ at $x_0$.

By the Hopf Lemma, we have
\begin{align}
0\leq v_{\nu} =& u_{\xi_0\xi_0\nu} -D_{\nu}a^m u_m  -a^mu_{m\nu}-b_{\nu}+ 2K(x\cdot \nu)+2u_m u_{m\nu}  \notag \\
\leq& u_{\xi_0\xi_0\nu} + (2u_m -a^m ) u_{m\nu} + C.
\end{align}
Following the argument in \cite{MQ15}, we can get
\begin{equation}
u_{\xi_0 \xi_0 \nu}\leq - \kappa_{\min}u_{\xi_0 \xi_0 } + C(1+|u_{\nu\nu}|),
\end{equation}
and
\begin{equation}
|u_{m\nu} |\leq  C, \quad m =1, \cdots, n.
\end{equation}
Therefore we have
\begin{equation}
u_{\xi_0 \xi_0}\leq C(1+|u_{\nu\nu}|).
\end{equation}

Case b: $\xi_0$ is non-tangential to $\partial\Omega$ at $x_0$.

We write $\xi=\hat{\alpha}\tau+\hat{\beta} \nu$, where $\hat{\alpha}=\xi\cdot\tau$, $\tau\cdot\nu=0$, $|\tau|=1$, $\hat{\beta}=\xi\cdot\nu\neq0$ and $\hat{\alpha}^2+\hat{\beta}^2=1.$ Then we have
\begin{align*}
u_{\xi\xi}=&\hat{\alpha}^2u_{\tau\tau}+\hat{\beta}^2u_{\nu\nu}+2\hat{\alpha}\hat{\beta} u_{\tau\nu}  \\
=&\hat{\alpha}^2u_{\tau\tau}+\hat{\beta}^2u_{\nu\nu}  + 2  (\xi_0 \cdot \nu)[\xi_0 - (\xi_0 \cdot \nu) \nu] [D \varphi - \varepsilon D u- D_m u D \nu^m],
\end{align*}
hence
\begin{equation}
v(x_0,\xi)=\hat{\alpha}^2 v(x_0,\tau)+\hat{\beta}^2v(x_0,\nu)
\leq \hat{\alpha}^2 v(x_0,\xi)+\hat{\beta}^2v(x_0,\nu),
\end{equation}
hence \begin{equation}
v(x_0,\xi)\leq v(x_0,\nu)\leq C(1+\max_{\partial \Omega}|u_{\nu\nu}|).
\end{equation}

\textbf{Step 2. Prove} $\min_{\partial \Omega} u_{\nu\nu} \geq -C$.

We assume $\min_{\partial\Omega}u_{\nu\nu}<0$. Also if $-\min_{\partial \Omega}u_{\nu\nu}<\max_{\partial\Omega}u_{\nu\nu}$, that is $\max_{\partial\Omega}|u_{\nu\nu}|=\max_{\partial\Omega}u_{\nu\nu}$, we shall deal this case in the next step. Thus we shall assume $-\min_{\partial\Omega}u_{\nu\nu}\geq \max_{\partial\Omega}u_{\nu\nu}$, that is $\max_{\partial\Omega}|u_{\nu\nu}|=-\min_{\partial\Omega}u_{\nu\nu}$. Denote $M:=-\min_{\partial \Omega} u_{\nu\nu}>0$ and $\bar x_0\in \partial\Omega$ such that $\min_{\partial\Omega}u_{\nu\nu}=u_{\nu\nu}(\bar x_0)$.

We consider the following test function in $\Omega_\mu = \{ x \in \Omega: 0 < d(x) < \mu\}$ ($d$ is the distance function of $\Omega$, and $\mu$ is a small universal constant)
\begin{equation}\label{011401}
P(x)= (1+\beta d)[Du \cdot (-Dd)+ \varepsilon  u -\varphi(x)]-(A+\frac{1}{2}M)d,
\end{equation}
where $\beta$ and $A$ are positive constants to be determined later.

On $\partial \Omega$, $P(x)=0$, and on $\partial \Omega_{\mu} \setminus \partial\Omega$, we have $d=\mu$ and
\begin{align*}
P(x) \leq (1+\beta \mu)[|Du| + |\varepsilon  u| +  |\varphi(x)|] - A \mu  \leq 0,
\end{align*}
since we take $A$ big enough. So on $\partial \Omega_{\mu},$ we have $P\leq 0$.  In the following, we want to prove $P$ attains its maximum only on $\partial \Omega$. Then we can get for any $x_0 \in \partial \Omega$
\begin{align}\label{5.34}
0 \leq P_\nu (x_0)  =&  [u_{\nu \nu}(x_0) - \sum\limits_m {u_m d_{m\nu} }   + \varepsilon u_\nu  - \varphi_\nu] + (A + \frac{1} {2}M)  \notag\\
\leq& u_{\nu \nu}(x_0) + |D u| |D^2 d| + \varepsilon |D u|  + |D \varphi| + A + \frac{1} {2}M,
\end{align}
which finishes the proof of Step 2.

To prove $P$ attains its maximum only on $\partial \Omega$, we assume $P$ attains its maximum at some point $\bar x_0 \in \Omega_\mu$ by contradiction. Rotating the coordinates, we can assume $D^2 u(\bar x_0)$ is diagonal, and then so is $\{ G^{ij} \}$. In the following, all the calculations are at $ \bar x_0$.

Firstly, we have
\begin{align} \label{3.28}
0= P_i(\bar x_0) = - (1+\beta d)u_{ii}d_i -(A+\frac{1}{2}M)d_i + O(1),
\end{align}
and
\begin{align} \label{3.29}
0\geq& G^{ii}P_{ii}(\bar x_0) \notag \\
=& G^{ii}\Big[ -2\beta u_{ii}{d_i}^2 + (1+\beta d)[-(u_{mii}d_m+2u_{ii}d_{ii}) + \varepsilon u_{ii}]- (A+\frac{1}{2}M)d_{ii} + O(1) \Big] \notag \\
\geq &  -2\beta G^{ii}u_{ii}{d_i}^2 - 2(1+\beta d) G^{ii}u_{ii}d_{ii} +(A+\frac{1}{2}M)c_0 - C,
\end{align}
where we used the fact
\begin{align} \label{3.30}
- G^{ii}d_{ii} \geq& \frac{{\partial \left( {\frac{{\sigma _k }}{{\sigma _{k - 1} }}} \right)}}{{\partial u_{ii} }} (- d_{ii} ) \notag \\
\geq & c(n,k) \kappa_{min} \sum_{i\ne m_0} \frac{{\partial \left( {\frac{{\sigma _k }}{{\sigma _{k - 1} }}} \right)}}{{\partial u_{ii} }} \geq c_0(n,k, \kappa_{min} )>0,
\end{align}
for some $1 \leq m_0 \leq n$.

Denote $B = \{ i:\beta {d_i }^2  < \frac{1}{n},1 \leq i \leq n\}$ and $G = \{ i:\beta {{d_i}} ^2  \geq  \frac{1}{n},1 \leq i \leq n\}$. We choose $\beta \geq \frac{1}{\mu} > 1$, so
\begin{align}
{d_i }^2 < \frac{1}{n} = \frac{1}{n} |D d|^2, \quad i \in B.
\end{align}
It holds $ \sum_{i \in B} {d_i} ^2 < 1 = |D d|^2$, and $G$ is not empty. Hence for any $i \in G$, it holds
\begin{align}
{d_{i}} ^2 \geq \frac{1}{n\beta}. \notag
\end{align}
From \eqref{3.28}, we have
\begin{align}
u_{i i }  =  - \frac{{1}}{1+ \beta d}(A + \frac{1}{2}M) + \frac{{O(1)}}{(1+ \beta d) d_{i }}.
\end{align}
So when $A$ is large enough, we can get
\begin{align}
 u_{i i }  \leq  - \frac{A+M}{5}, \quad \forall \quad i \in G.
\end{align}
Also there is an $i_0  \in G$ such that
\begin{align}
{d_{i_0} }^2 \geq \frac{1}{n} |D d|^2 = \frac{1}{n}.
\end{align}
From \eqref{3.29}, we have
\begin{align}
0\geq&G^{ii}P_{ii}(\bar x_0)  \notag \\
\geq& -2\beta\sum_{i\in G} G^{ii}u_{ii}{d_i}^2  - 2\beta\sum_{i\in B} G^{ii}u_{ii}{d_i}^2- 2(1+\beta d)\sum_{u_{ii}<0} G^{ii}u_{ii}d_{ii} \notag \\
& +(A+\frac{1}{2}M)c_0-C  \notag \\
\geq & -\frac{2\beta}{n}G^{i_0i_0}u_{i_0i_0} -C +(\frac{2}{n}+4\kappa_{\max})\sum_{u_{ii}<0}G^{ii}u_{ii},
\end{align}
where we used the facts
\begin{equation}
-2\beta\sum_{i\in G} G^{ii}u_{ii}{d_i}^2 \geq -2\beta G^{i_0i_0}u_{i_0i_0}{d_{i_0}}^2 \geq -\frac{2\beta}{n}G^{i_0i_0}u_{i_0i_0},
\end{equation}
and
\begin{align*}
- 2\beta\sum_{i\in B} G^{ii}u_{ii}{d_i}^2 &\geq -2\beta\sum_{i\in B, u_{ii}>0}G^{ii}u_{ii}{d_i}^2 \geq -\frac{2}{n}\sum_{i\in B, u_{ii}>0}G^{ii}u_{ii}\\
&\geq -\frac{2}{n}\sum_{u_{ii}>0}G^{ii}u_{ii} =-\frac{2}{n}[\sum G^{ii}u_{ii}-\sum_{u_{ii}<0}G^{ii}u_{ii}]\\
&\geq \frac{2}{n}\sum_{u_{ii}<0}G^{ii}u_{ii}.
\end{align*}

Therefore, we have
\begin{align}
0\geq&G^{ii}P_{ii}(\bar x_0) \notag \\
\geq & \frac{2\beta}{n}(\frac{A+M}{5})c(n,k)\sum_{i=1}^n G^{ii} -C - (\frac{2}{n}+4\kappa_{\max})\sum_{u_{ii}<0} G^{ii} |D^2 u| \notag \\
\geq & \frac{2\beta}{n}(\frac{A+M}{5})c(n,k) \frac{n-k+1}{k} -C - (\frac{2}{n}+4\kappa_{\max})(n-k+1)C(1+M) \notag \\
> & 0,
\end{align}
by taking $\beta$ big enough. This is a contradiction. So $P$ attains its maximum only on $\partial \Omega$.

\textbf{Step 3. Prove} $\max_{\partial \Omega} u_{\nu\nu} \leq C$.

Similar with Step 2, we can assume $\max_{\partial\Omega}u_{\nu\nu}>0$, and $-\min_{\partial\Omega}u_{\nu\nu}\leq \max_{\partial\Omega}u_{\nu\nu}$, that is $\max_{\partial\Omega}|u_{\nu\nu}|=\max_{\partial\Omega}u_{\nu\nu}$. Denote $M :=\max_{\partial\Omega} u_{\nu\nu}>0$ and $\tilde{x_0}\in \partial\Omega$ such that $\max_{\partial\Omega}u_{\nu\nu}=u_{\nu\nu}(\tilde{x_0})$.

We consider the following test function in $\Omega_\mu = \{ x \in \Omega: 0 < d(x) < \mu\}$ ($d$ is the distance function of $\Omega$, and $\mu$ is a small universal constant)
\begin{equation}\label{3.38}
\widetilde{P}(x)= (1+\beta d)[Du \cdot (-Dd)+ \varepsilon  u -\varphi(x)] + (A+\frac{1}{2}M)d,
\end{equation}
where $\beta$ and $A$ are positive constants to be determined later.

On $\partial \Omega$, $\widetilde{P}(x)=0$, and on $\partial \Omega_{\mu} \setminus \partial\Omega$, we have $d=\mu$ and
\begin{align*}
\widetilde{P}(x) \geq -(1+\beta \mu)[|Du| + |\varepsilon  u| +  |\varphi(x)|] + A \mu  \geq 0,
\end{align*}
since we take $A$ big enough. So on $\partial \Omega_{\mu},$ we have $\widetilde{P} \geq 0$.  In the following, we want to prove $\widetilde{P}$ attains its minimum only on $\partial \Omega$. Then we can get for any $x_0 \in \partial \Omega$
\begin{align}
0 \geq \widetilde{ P}_\nu (x_0)  =&  [u_{\nu \nu}(x_0) - \sum\limits_m {u_m d_{m\nu} }   + \varepsilon u_\nu  - \varphi_\nu] - (A + \frac{1} {2}M)  \notag\\
\geq& u_{\nu \nu}(x_0) - |D u| |D^2 d| - \varepsilon |D u|  - |D \varphi| -( A + \frac{1} {2}M),
\end{align}
which finishes the proof of Step 3.

To prove $\widetilde{P}$ attains its minimum only on $\partial \Omega$, we assume $\widetilde{P}$ attains its minimum at some point $\tilde{x_0} \in \Omega_\mu$ by contradiction. Rotating the coordinates, we can assume $D^2 u(\tilde{x_0})$ is diagonal, and then so is $\{ G^{ij} \}$. In the following, all the calculations are at $ \tilde{x_0}$.

Firstly, we have
\begin{align} \label{3.40}
0= \widetilde{P}_i(\tilde{x_0}) = - (1+\beta d)u_{ii}d_i +(A+\frac{1}{2}M)d_i + O(1),
\end{align}
and
\begin{align} \label{3.41}
0\leq& G^{ii}\widetilde{P}_{ii}(\tilde{x_0}) \notag \\
=& G^{ii}\Big[ -2\beta u_{ii}{d_i}^2 + (1+\beta d)[-(u_{mii}d_m+2u_{ii}d_{ii}) + \varepsilon u_{ii}]+ (A+\frac{1}{2}M)d_{ii} + O(1) \Big] \notag \\
\leq &  -2\beta G^{ii}u_{ii}{d_i}^2 - 2(1+\beta d) G^{ii}u_{ii}d_{ii} -(A+\frac{1}{2}M)c_0 + C.
\end{align}

Denote $B = \{ i:\beta {d_i} ^2  < \frac{1}{n},1 \leq i \leq n\}$ and $G = \{ i:\beta {d_i} ^2  \geq  \frac{1}{n},1 \leq i \leq n\}$. We choose $\beta \geq \frac{1}{\mu} > 1$, so
\begin{align}\label{3.42}
{d_i} ^2 < \frac{1}{n} = \frac{1}{n} |D d|^2, \quad i \in B.
\end{align}
It holds $ \sum_{i \in B} {d_i} ^2 < 1 = |D d|^2$, and $G$ is not empty. Hence for any $i \in G$, it holds
\begin{align}\label{3.43}
{d_{i}} ^2 \geq \frac{1}{n\beta}.
\end{align}
and from \eqref{3.40}, we have
\begin{align}
u_{i i }  =   \frac{{1}}{1+ \beta d}(A + \frac{1}{2}M) + \frac{{O(1)}}{(1+ \beta d) d_{i }}.
\end{align}
So when $A$ is large enough, we can get
\begin{align}
 \frac{3A}{5}+ \frac{2M}{5} \leq  u_{i i }  \leq \frac{6A}{5}+ \frac{M}{2}, \quad \forall \quad i \in G.
\end{align}
Also there is an $i_0  \in G$ such that
\begin{align}
{d_{i_0}} ^2 \geq \frac{1}{n} |D d|^2 = \frac{1}{n}.
\end{align}
From \eqref{3.41}, we have
\begin{align} \label{3.47}
0\leq& G^{ii}\widetilde{P}_{ii}( \tilde{x_0})  \notag \\
\leq& -2\beta\sum_{i\in G} G^{ii}u_{ii}{d_i}^2  - 2\beta\sum_{i\in B} G^{ii}u_{ii}{d_i}^2- 2(1+\beta d)\sum_{u_{ii}>0} G^{ii}u_{ii}d_{ii} \notag \\
& -(A+\frac{1}{2}M)c_0 + C  \notag \\
\leq & -\frac{2\beta}{n}G^{i_0i_0}u_{i_0i_0} +(\frac{2}{n}+4\kappa_{\max})\sum_{u_{ii}> 0}G^{ii}u_{ii} -(A+\frac{1}{2}M)c_0 + C,
\end{align}
where we used the facts
\begin{equation}
-2\beta\sum_{i\in G} G^{ii}u_{ii}{d_i}^2 \leq -2\beta G^{i_0i_0}u_{i_0i_0}{d_{i_0}}^2 \leq -\frac{2\beta}{n}G^{i_0i_0}u_{i_0i_0},
\end{equation}
and
\begin{align*}
- 2\beta\sum_{i\in B} G^{ii}u_{ii}d_i^2 &\leq -2\beta\sum_{i\in B, u_{ii}<0}G^{ii}u_{ii}d_i^2 \leq -\frac{2}{n}\sum_{i\in B, u_{ii}<0}G^{ii}u_{ii}\\
&\leq -\frac{2}{n}\sum_{u_{ii}<0}G^{ii}u_{ii} =-\frac{2}{n}[\sum G^{ii}u_{ii}-\sum_{u_{ii}>0}G^{ii}u_{ii}]\\
&\leq  \frac{2}{n}\sum_{u_{ii}>0}G^{ii}u_{ii}.
\end{align*}

In the following , we divide into three cases to prove the result. Without loss of generality, we can assume that $i_0=1\in G$, and $u_{22}\geq \cdots\geq u_{nn}$.

Case I: $u_{nn}>0$.

In this case, we have
\begin{align} \label{3.49}
0\leq& G^{ii}\widetilde{P}_{ii}( \tilde{x_0})  \notag \\
\leq & -\frac{2\beta}{n}G^{i_0i_0}u_{i_0i_0} +(\frac{2}{n}+4\kappa_{\max})\sum G^{ii}u_{ii} -(A+\frac{1}{2}M)c_0 + C \notag \\
\leq & (\frac{2}{n}+4\kappa_{\max}) C(n,k, \sum \sup \alpha_l) -(A+\frac{1}{2}M)c_0 + C \notag \\
<& 0,
\end{align}
by taking $A$ large enough. This is a contradiction.

Case II: $u_{nn}<0$ and $-u_{nn}<\frac{c_0}{10n(4\kappa_{\max}+\frac{2}{n})}u_{11}.$
\begin{align} \label{3.50}
(4\kappa_{\max}+\frac{2}{n})\sum_{u_{ii}>0}G^{ii}u_{ii} =&(4\kappa_{\max}+\frac{2}{n})[\sum_{i=1}^n G^{ii}u_{ii}-\sum_{u_{ii}<0}G^{ii}u_{ii}] \notag \\
\leq &(4\kappa_{\max}+\frac{2}{n})[\sum_{i=1}^n G^{ii}u_{ii}-u_{nn}\sum_{i=1}^nG^{ii}]  \notag \\
\leq& C+\frac{c_0}{10n}u_{11}\sum_{i=1}^n G^{ii}   \notag \\
\leq & C+ \frac{c_0}{10}(\frac{6A}{5}+\frac{M}{2}).
\end{align}

Hence combining \eqref{3.47} and \eqref{3.50}, we have
\begin{align} \label{3.49}
0\leq& G^{ii}\widetilde{P}_{ii}( \tilde{x_0})  \notag \\
\leq & (\frac{2}{n}+4\kappa_{\max})\sum_{u_{ii}>0} G^{ii}u_{ii} -(A+\frac{1}{2}M)c_0 + C \notag \\
\leq &  \frac{c_0}{10}(\frac{6A}{5}+\frac{M}{2}) -(A+\frac{1}{2}M)c_0 + C \notag \\
<& 0,
\end{align}
by taking $A$ large enough. This is a contradiction.

Case III: $u_{nn}<0$ and $-u_{nn}\geq\frac{c_0}{10n(4\kappa_{\max}+\frac{2}{n})}u_{11}.$

We have $u_{11}\geq \frac{3A}{5}+\frac{2M}{5}$ and $u_{22}\leq C(1+M)$. So $u_{11}\geq \frac{2}{5C}u_{22}$. Let $\delta=\frac{2}{5C}$ and $\epsilon=\frac{c_0}{10n(4\kappa_{\max}+\frac{2}{n})}$, by Lemma \ref{lem2.9}, we have
\begin{equation}\label{3.52}
G^{11} \geq c_2\sum_{i=1}^n G^{ii}.
\end{equation}
Hence from \eqref{3.47} and \eqref{3.52}, we have
\begin{align} \label{3.49}
0\leq& G^{ii}\widetilde{P}_{ii}( \tilde{x_0})  \notag \\
\leq & -\frac{2\beta}{n}G^{11}u_{11} +(\frac{2}{n}+4\kappa_{\max})\sum_{u_{ii}>0} G^{ii}u_{ii} -(A+\frac{1}{2}M)c_0 + C \notag \\
\leq &  -\frac{2\beta}{n}c_2(\frac{3A}{5}+\frac{2M}{5})\sum_{i=1}^nG^{ii}+ (4\kappa_{\max}+\frac{2}{n})C(1+M)\sum_{i=1}^n G^{ii} \notag \\
<& 0,
\end{align}
by taking $\beta$ and $A$ big enough. This is a contradiction.

The proof is finished.
\end{proof}

\section{Existence}

 In this section, we prove Theorem \ref{th1.1}.

Firstly, we prove the existence of the $k$-admissible solution of the approximation equation \eqref{1.3} for any small $\varepsilon >0$.

For the Neumann problem of approximation equation \eqref{1.3}, we have established the $C^2$ estimates in Section 3. By the global $C^2$ a priori estimates, we obtain that the equation \eqref{1.3} are uniformly elliptic in $\overline \Omega$.
Due to the concavity of the operator $G$, we can get the global H\"{o}lder estimates of second derivative following the discussions in \cite{LT86}, that is, we can get
\begin{align}\label{4.1}
|u|_{C^{2,\alpha}(\overline \Omega)} \leq C,
\end{align}
where $C$ and $\alpha$ depend on $n$, $k$, $\Omega$, $\varepsilon$, $\inf \alpha_l$, $|\alpha_l|_{C^2}$ and $|\varphi|_{C^3}$.

Applying the method of continuity (see \cite{GT}, Theorem 17.28), we can show the existence of the classical solution, and the solution is unique by Hopf lemma. By the standard regularity theory of uniformly elliptic partial differential equations, we can obtain the high order regularity.

Now, we start to prove Theorem \ref{th1.1}.

By the above argument, we know there exists a unique $k$-admissible solution $u^\varepsilon \in C^{3, \alpha}(\overline \Omega)$ to \eqref{1.3} for any small $\varepsilon >0$. Let $v^\varepsilon =u^\varepsilon - \frac{1}{|\Omega|} \int_\Omega u^\varepsilon$, and it is easy to know $v^\varepsilon$ satisfies
\begin{align}
\left\{ \begin{array}{l}
\sigma _k (D^2 v^\varepsilon) = \sum\limits_{l=0}^{k-1} \alpha_l \sigma _l (D^2 v^\varepsilon),  \quad \text{in} \quad \Omega,\\
(v^\varepsilon)_\nu = - \varepsilon v^\varepsilon - \frac{1}{|\Omega|} \int_\Omega \varepsilon u^\varepsilon + \varphi(x),\quad \text{on} \quad \partial \Omega.
 \end{array} \right.
\end{align}
By the global gradient estimate \eqref{3.7}, it is easy to know $\varepsilon \sup |D u^\varepsilon | \rightarrow 0$. Hence there is a constant $c$ and a function $v \in C^{2}(\overline \Omega)$, such that $-\varepsilon u^\varepsilon \rightarrow c$, $-\varepsilon v^\varepsilon \rightarrow 0$, $-\frac{1}{|\Omega|} \int_\Omega \varepsilon u^\varepsilon \rightarrow c$ and $v^\varepsilon \rightarrow v$ uniformly in $C^{2}(\overline \Omega)$ as $\varepsilon \rightarrow 0$. It is easy to verify that $v$ is a $k$-admissible solution of
\begin{align}
\left\{ \begin{array}{l}
\sigma _k (D^2 v) = \sum\limits_{l=0}^{k-1} \alpha_l \sigma _l (D^2 v),  \quad \text{in} \quad \Omega,\\
v_\nu = c + \varphi(x),\qquad \text{on} \quad \partial \Omega.
 \end{array} \right.
\end{align}
If there is another function $v_1 \in C^{2}(\overline \Omega)$ and another constant $c_1$ such that
\begin{align}
\left\{ \begin{array}{l}
\sigma _k (D^2 v_1) = \sum\limits_{l=0}^{k-1} \alpha_l \sigma _l (D^2 v_1),  \quad \text{in} \quad \Omega,\\
(v_1)_\nu = c_1 + \varphi(x),\qquad \text{on} \quad \partial \Omega.
 \end{array} \right.
\end{align}
Applying the maximum principle and Hopf Lemma, we can know $c = c_1$ and $v - v_1$ is a constant.
By the standard regularity theory of uniformly elliptic partial differential equations, we can obtain the high oder regularity.

\end{document}